\newtheorem{theorem}{Theorem}
\newtheorem{lemma}[theorem]{Lemma}
\newtheorem{remark}[theorem]{Remark}
\newenvironment{proof}[1][Proof]{\textbf{#1.} }{\ \rule{0.5em}{0.5em}}
\begin{document}

\author{Pelin G. Geredeli \\
School of Mathematical and Statistical Sciences,\\ Clemson University, Clemson-SC 29634, USA \\ pgerede@clemson.edu }
\title{ An inf-sup Approach to $C_0$-Semigroup Generation for An Interactive Composite Structure-Stokes PDE Dynamics
}
\maketitle

\begin{abstract}

In this work, we investigate the existence and uniqueness properties of a composite structure (multilayered) fluid interaction PDE system which arises in multi-physics problems, and particularly in biofluidic applications related to the mammalian blood transportation process. The PDE system under consideration consists of the interactive coupling of 3D Stokes flow and 3D elastic dynamics which gives rise to an additional 2D elastic equation on the boundary interface between these 3D PDE systems. 

By means of a nonstandard mixed variational formulation we show that the PDE system generates a $C_0$-semigroup on the associated finite energy space of data. In this work, the presence of the pressure term in the 3D Stokes equation adds a great challenge to our analysis. To overcome this difficulty, we follow a methodology which is based on the necessarily non-Leray based elimination of the associated pressure term, via appropriate nonlocal operators. Moreover, while we express the fluid solution variable via decoupling of the Stokes equation, we construct the elastic solution variables by solving a mixed variational formulation via a Babuska-Brezzi approach. 

\vskip.3cm \noindent \textbf{Key terms:} Wellposedness, Fluid-Structure Interaction, Stokes Flow, Strongly Continuous Semigroups  

\vskip.3cm \noindent \textbf{Mathematics Subject Classification (MSC): 35A01, 35A02, 35A09}

\end{abstract}

\section{Introduction}

\hspace{0.6cm}Fluid structure interaction phenomena make their appearence in a variety of natural settings, e.g; cellular dynamics, blood transportation processes within arterial walls, locamotive dynamics of fish, birds and insects, design of aircraft, etc. \cite{av-trig, FSIforBIO, buk, Chambolle, Courtand, Du, hsu, KochZauzua, SunBorMulti}. Since all these dynamics are governed via multi-physics PDE models, the analysis of solutions to these PDEs is a pertinent mathematical challenge. Accordingly, a comprehensive analytic theory for such systems is mostly not available despite an extensive research activity in the last 20 years. 

With respect to the literature, coupled heat-wave PDE systems (and some variations) have been considered in the past where the heat equation component is regarded as a simplification of the fluid flow component of the fluid-structure interaction (FSI) dynamics, and the wave equation is regarded as a simplification of the structural (elastic) component; see e.g., \cite[Section 9]{lions1969quelques} and \cite{RauchZhangZuazua}. Also, the FSI dynamics in which the fluid PDE component of fluid-structure interactions is governed by Stokes or Navier-Stokes flow were studied in various settings \cite{av-trig, AvalosTriggiani09, Barbu, Chambolle, Courtand, Du, lions1969quelques}. 

We note that in the works mentioned above only “single layered” FSI models – i.e., FSI models in which only one (three dimensional) elastic PDE appears to describe the structural dynamics – have been considered. However, if one is interested in the mathematical modeling of vascular blood flow --which has been a popular subject of study of the last decade-- then the methodologies to analyze the qualitative properties of the above mentioned single-layered FSI dynamics might have limited applicability. Indeed, a given modeling PDE dynamics should  account for the fact that blood-transporting vessels are generally composed of several tissue layers, each with different constitutive properties. (See \cite{multi-layered} for more details). Moreover, mammalian blood vascular walls, being composed of viscoelastic materials, undergo large deformations due to hemodynamic forces generated during the blood transport process. As such, there is a coupling of respective blood flow and wall deformation dynamics. This physiological interaction between arterial walls and blood flow plays a crucial role in the physiology and pathophysiology of the human cardiovascular system \cite{ap-1, hsu, BorisSimplifiedFSI, SunBorMulti}, and can be mathematically realized by the appropriate FSI PDE system. In such FSI modeling, the blood flow is governed by the fluid flow PDE component (incompressible Stokes or Navier Stokes); the displacements along the elastic vascular wall are described by structural PDE components (e.g., Lam$\acute{e}$ systems of elasticity). 

As we said, it is understood that since the vascular wall structures are typically not single-layered, some degree of physical realism is lost if an arterial wall is taken to have no composite layers. Moreover, it is seen in biomedical applications that many devices (such as stents) are being developed with the view that vascular wall structures are manifestations of composite materials \cite{buk, multi-layered, SunBorMulti}. 

However, in contrast to the growing literature on single layered FSI PDE systems, mathematical analysis for the composite structure materials  is relatively limited. An initial contribution to this problem is the pioneering paper \cite{SunBorMulti} where multilayered FSI is composed of 2D (thick layer) wave equation and 1D wave equation (thin layer) coupled to a 2D fluid PDE across a boundary interface. In  \cite{SunBorMulti}, the authors proved the wellposedness of the said multilayered coupled system using a partitioned, loosely coupled scheme. This work represented a crucial result for future studies, since it showed that the presence of a thin structure with mass at the fluid–structure interface indeed regularizes the FSI dynamics.

Motivated by this work, the authors in \cite{AGM}, considered a multilayered ``canonical" 3D heat-2D wave-3D wave coupled system with the objective of analyzing the existence-uniqueness and the asymptotic behavior of the corresponding solutions. They obtained the wellposedness result using the semigroup approach, particularly, via an appropriate invocation of the Lax-Milgram Theorem. In order to obtain the asymptotic decay to the zero state, the authors of \cite{AGM} subsequently investigated the spectrum of the corresponding $C_0-$semigroup generator of the PDE system and showed that its spectrum does not intersect the imaginary axis. 

Subsequently, in \cite{AA}, a multilayered Lam$\acute{e}$-heat PDE system was considered, and an analogous asymptotic decay property was proved. Unlike the previous work \cite{AGM}, where spectral properties were necessarily obtained with respect to the generator of the corresponding PDE system, the methodology in \cite{AA} was based on the pointwise resolvent condition introduced by Y. Tomilov \cite{tomilov}, which allowed the authors to avoid the sort of technical PDE multipliers invoked in \cite{AGM}. This asymptotic stability result has recently been improved in \cite{RD}, where the authors proved that the solution to the multilayered ``canonical" 3D heat - 2D wave - 3D wave coupled system considered in \cite{AGM}, actually satisfies a rational decay rate with respect to smooth initial data.

While it is surely interesting to see that canonical multilayered structure fluid interaction PDE system manifests qualitative properties analogous to ``single layered" FSI systems, an immediate followup question is ``Can one still show the existence of a unique solution to a composite (multilayered) structure-fluid interaction PDE system, if the canonical heat component is replaced with a more realistic Stokes flow?"  With the objective of giving a positive answer to this question, in this manuscript, we consider a composite structure FSI PDE model, where the coupling of the 3D Stokes flow and 3D elastic (structural) PDE components is realized via an additional 2D elastic system on the boundary interface. We note that the boundary interface is not assumed here to evolve with time. However, it is well-accepted that for various FSI phenomena, where the boundary interface displacements between structure and fluid are small relative to the scale of the geometry, the resulting static interface models are physically relevant and reliable (see \cite{Du} and \cite{lions1969quelques}).

Our main goal in the current paper is to establish a semigroup wellposedness of the coupled Stokes-wave-Lam$\acute{e}$ PDE model. However, unlike the papers mentioned above, the presence of the pressure term gives rise to significant mathematical challenges--not seen in the aforecited works-- in determining the existence and uniqueness of solutions, and hence this term requires a nonstandard elimination for semigroup generation of the corresponding dynamical system. This constitutes the crux of the matter in our analysis, at least initially. Moreover, having the “thin” elastic dynamics on the interaction interface, as well as the matching velocity boundary conditions for fluid and structure variables prevent us from applying the standard techniques derived for single layered FSI PDEs, or uncoupled fluid flows. In this regard, \textit{to the best of the author's knowledge there are no results about the wellposedness of a composite structure-Stokes FSI system with an elastic interface. }

\section{Composite structure-Stokes fluid interaction PDE model}

\noindent In what follows, we describe the composite structure (multilayered)-Stokes fluid interaction PDE system under consideration: Let the fluid geometry $\Omega _{f}$ $\subseteq \mathbb{R}^{3}$ be a
Lipschitz, bounded domain with exterior boundary $\Gamma _{f}$. The
structure domain $\Omega _{s}$ $\subseteq \mathbb{R}^{3}$ is
\textquotedblleft completely immersed\textquotedblright\ in $\Omega _{f}$ (See figure below) 

\begin{center}
\includegraphics[scale=0.4]{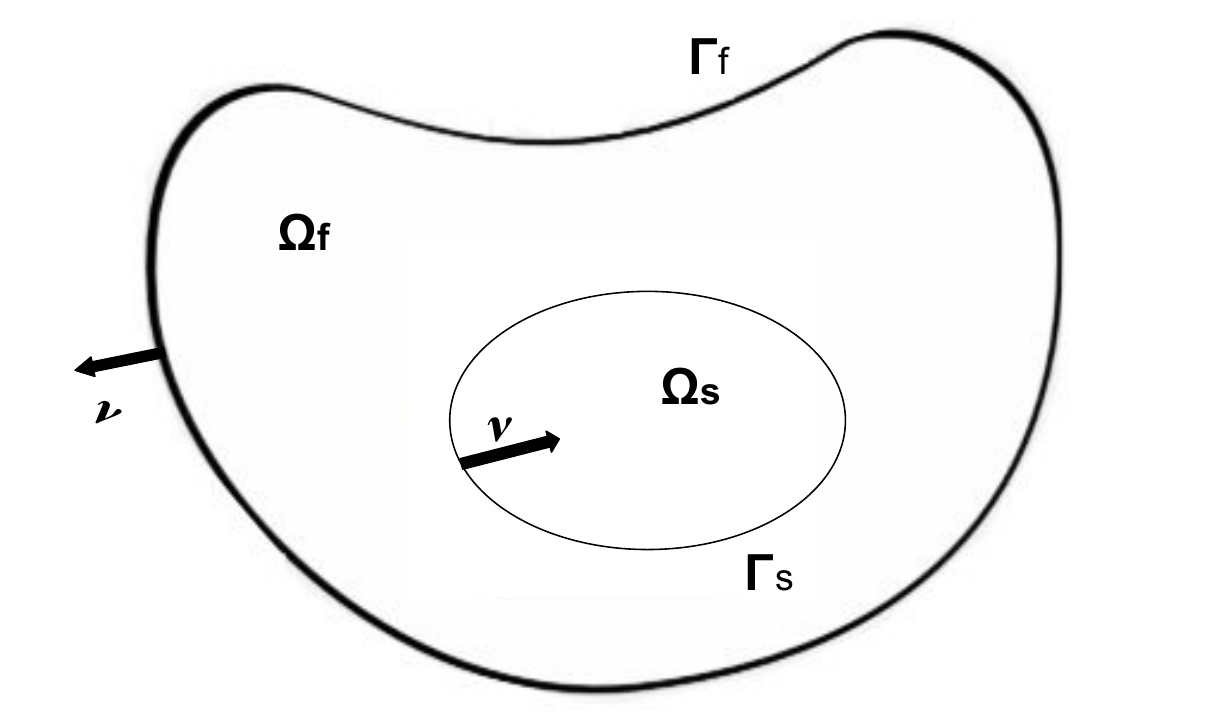}

\textbf{Figure: Geometry of the FSI Domain}
\end{center}

\noindent The boundary $\Gamma _{s}=\partial \Omega _{s},$ between $\Omega _{f}$ and $%
\Omega _{s}$ is of class $C^2,$ and given as the interaction interface between the fluid and
structure dynamics. In addition, $\nu (.)$ is the unit normal vector which is outward with
respect to $\Omega _{f},$ and so inwards with respect to $\Omega _{s}.$ With the geometry $%
\{\Omega _{s},\Omega _{f}\}$ as given, the PDE system under consideration is:

\begin{equation}
\left\{ 
\begin{array}{l}
u_{t}-\text{div}(\nabla u+\nabla ^{T}u)+\nabla p=0\text{ \ \ \ in \ }%
(0,T)\times \Omega _{f} \\ 
\text{div}(u)=0\text{ \ \ \ \ \ \ \ \ \ \ \ \ \ \ \ \ \ \ \ \ \ \ \  \ \ \ \ \ \  in \ }(0,T)\times \Omega _{f} \\ 
u|_{\Gamma _{f}}=0\text{ \ \ \ \ \ \ \ \ \ \ \ \ \ \ \ \ \ \ \ \ \ \ \  \ \ \ \ \ \  \ \ \ on \ }(0,T)\times \Gamma _{f};%
\end{array}%
\right.   \label{2a}
\end{equation}
\begin{equation}
\left\{ 
\begin{array}{l}
h_{tt}-\Delta _{\Gamma _{s}}h=[\nu \cdot
\sigma (w)]|_{\Gamma _{s}}-[\nu \cdot (\nabla u+\nabla ^{T}u)]|_{\Gamma
_{s}}+p \nu \text{ \ \ \ on \ }(0,T)\times \Gamma _{s},\text{ \ \ }%
\end{array}%
\right.   \label{2.5b}
\end{equation}%
\begin{equation}
\left\{ 
\begin{array}{l}
w_{tt}-\text{div}\sigma (w)+w=0\text{ \ \ \ \ \ \ \ \ \ \ \ \  \ on \ }(0,T)\times \Omega _{s} \\ 
w_{t}|_{\Gamma _{s}}=h_{t}=u|_{\Gamma _{s}}\text{ \ \ \ \ \ \ \ \ \ \ \ \ \ \ \ \ \ \ \ \ on \ }(0,T)\times
\Gamma _{s}%
\end{array}%
\right.   \label{2d}
\end{equation}%
\begin{equation}
\lbrack u(0),h(0),h_{t}(0),w(0),w_{t}(0)]=[u_{0},h_{0},h_{1},w_{0},w_{1}]\in 
\mathbf{H}.  \label{IC}
\end{equation}%
Here, $\Delta _{\Gamma _{s}}(.)$ is the Laplace Beltrami operator, and the stress tensor $\sigma(.) $ constitute the Lam$\acute{e}$ system of elasticity on
the ``thick" layer. Namely, for function $v$ in $\Omega _{s},$ 
\[
\sigma (v)=2\mu \epsilon (v)+\lambda \lbrack I_{3}\cdot \epsilon (v)]I_{3},
\]%
where strain tensor $\epsilon (\cdot )$ is given by 
\[
\epsilon _{ij}(v)=\frac{1}{2}\left( \frac{\partial v_{j}}{\partial x_{i}}+%
\frac{\partial v_{i}}{\partial x_{j}}\right) ,\text{ \ \ }1\leq i,j\leq 3.
\]
Also, $\mathbf{H}$ is the finite energy space defined in \eqref{H} below. 
\begin{remark}
 For the sake of numerical computation, the structure geometry $\Omega _{s}\subset 
\mathbb{R}
^{3},$ can also be taken as a convex polyhedral domain with polygonal
boundary faces $\Gamma _{j},$ $1\leq j\leq K$, where $\Gamma _{i}\cap \Gamma
_{j}\neq \emptyset $ for $i\neq j,$ and, 
\[
\Gamma _{s}=\cup _{j=1}^{K}\overline{\Gamma }_{j}.
\]%

\begin{center}
\includegraphics[scale=0.4]{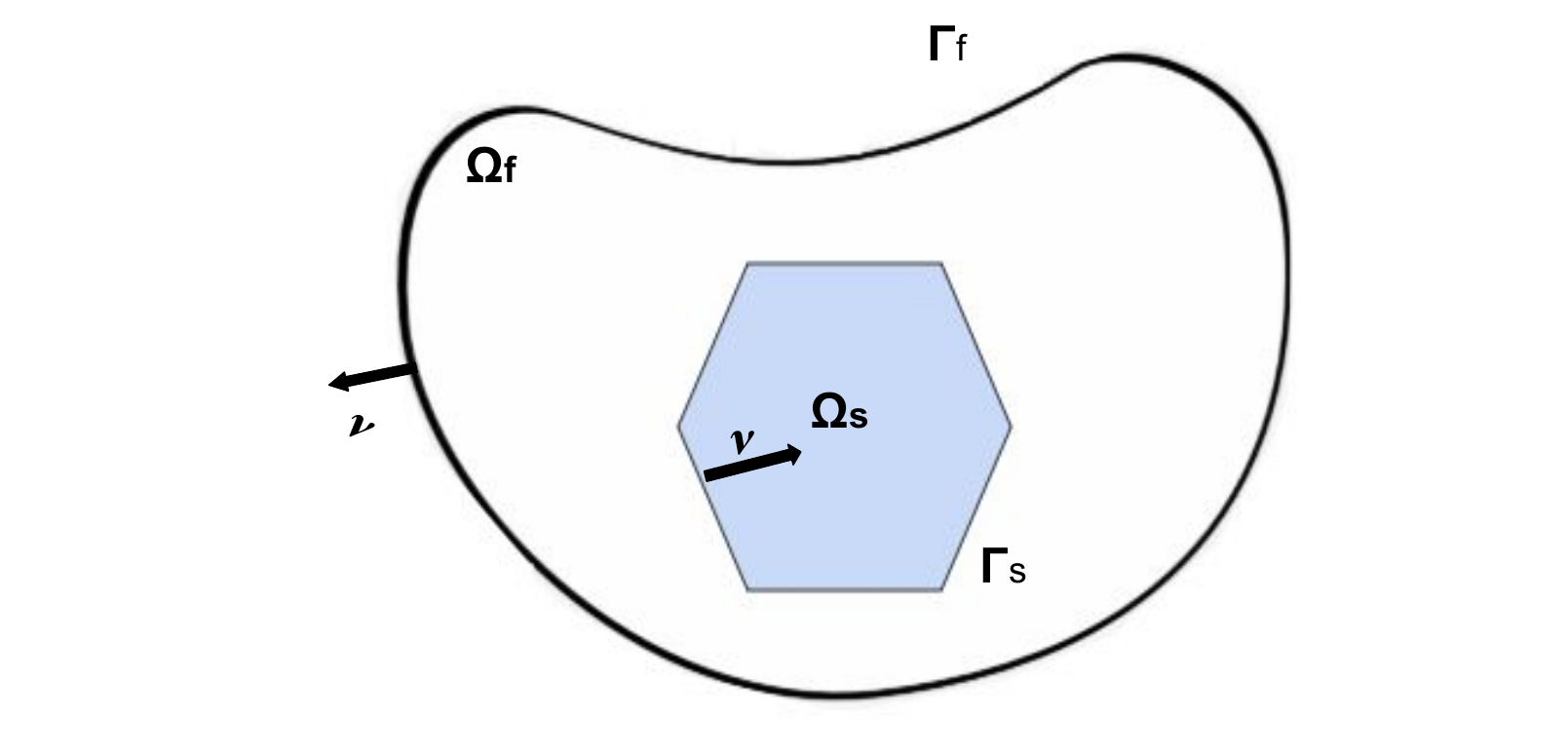}

\textbf{Figure: Alternative Geometry of the FSI Domain}
\end{center}
In this case, the thin wave equation can be modeled for $j=1,...,K$ as%
\[
\left\{ 
\begin{array}{l}
\frac{\partial ^{2}}{\partial t^{2}}h_{j}-\Delta h_{j}=[\nu \cdot
\sigma (w)]|_{\Gamma _{j}}-[\nu \cdot (\nabla u+\nabla ^{T}u)]|_{\Gamma
_{j}}+p \nu 
\text{ \ \ \ on \ }(0,T)\times \Gamma _{j} \\ 
h_{j}|_{\partial \Gamma _{j}\cap \partial \Gamma _{l}}=h_{l}|_{\partial
\Gamma _{j}\cap \partial \Gamma _{l}}\text{ on \ }(0,T)\times (\partial
\Gamma _{j}\cap \partial \Gamma _{l})\text{,\ } \forall~ 1\leq l\leq K\text{
s.t. }\partial \Gamma _{j}\cap \partial \Gamma _{l}\neq \emptyset  \\ 
\left. \dfrac{\partial h_{j}}{\partial n_{j}}\right\vert _{\partial \Gamma
_{j}\cap \partial \Gamma _{l}}=-\left. \dfrac{\partial h_{_{l}}}{\partial
n_{l}}\right\vert _{\partial \Gamma _{j}\cap \partial \Gamma _{l}}\text{on \ 
}(0,T)\times (\partial \Gamma _{j}\cap \partial \Gamma _{l})\text{,\ } \forall~ %
1\leq l\leq K\text{ s.t }\partial \Gamma _{j}\cap \partial \Gamma
_{l}\neq \emptyset .\text{\ }%
\end{array}%
\right. 
\]%
where the Laplace Beltrami Operator $\Delta _{\Gamma _{s}}(.)$ in \eqref{2.5b} is replaced with the standard Laplace
operator with the imposition of additional continuity and boundary conditions in
order to satisfy the surface differentiation (See \cite{AGM, RD}.)
   
\end{remark}
With respect to the PDE system given in (\ref{2a})-(\ref%
{IC}), the finite energy Hilbert space $\mathbf{H}$ is given as
\begin{equation}
\begin{array}{l}
\mathbf{H}=\{[u_{0},h_{0},h_{1},w_{0},w_{1}]\in [L^{2}(\Omega _{f})]^3\times
[H^{1}(\Gamma _{s})]^2\times [L^{2}(\Gamma _{s})]^2\times \\ 
\text{ \ \ \ \ \ \ \ \ \ \ \ }\times [H^{1}(\Omega _{s})]^3\times [L^{2}(\Omega
_{s})]^3: \text{ div}(u_{0})=0,\text{ \ \ }u_{0}\cdot \nu |_{\Gamma
_{f}}=0\text{, and }w_{0}|_{\Gamma _{s}}=h_{0}\text{ }\}%
\end{array}
\label{H}
\end{equation}%
with the inner product%
\begin{eqnarray}
\left\langle\Phi _{0},\widetilde{\Phi }_{0}\right\rangle_{\mathbf{H}} &=&\left\langle u_{0},\widetilde{u}%
_{0}\right\rangle_{\Omega _{f}}+\left\langle\nabla _{\Gamma _{s}}(h_{0}),\nabla _{\Gamma _{s}}(%
\widetilde{h}_{0})\right\rangle_{\Gamma _{s}}+\left\langle h_{1},\widetilde{h}_{1}\right\rangle_{\Gamma _{s}}
\nonumber \\
&&+\left\langle\sigma (w_{0}),\epsilon (\widetilde{w}_{0})\right\rangle_{\Omega _{s}}+\left\langle w_{0},%
\widetilde{w}_{0}\right\rangle_{\Omega _{s}}+\left\langle w_{1},\widetilde{w}_{1}\right\rangle_{_{\Omega _{s}}},
\label{Hilbert}
\end{eqnarray}%
where%
\begin{equation}
\Phi _{0}=\left[ u_{0},h_{0},h_{1},w_{0},w_{1}\right] \in \mathbf{H}\text{;
\ }\widetilde{\Phi }_{0}=\left[ \widetilde{u}_{0},\widetilde{h}_{0},%
\widetilde{h}_{1},\widetilde{w}_{0},\widetilde{w}_{1}\right] \in \mathbf{H}.
\label{stat}
\end{equation}

\subsection{Notation}

For the remainder of the text, norms $||\cdot ||_{D}$ are taken to be $%
L^{2}(D)$ for the domain $D$. Inner products in $L^{2}(D)$ are written $%
<\cdot ,\cdot >_{D}$, and the inner products $L_{2}(\partial D)$ are written $%
\langle \cdot ,\cdot \rangle_{\partial D} $. The space $H^{s}(D)$ will denote the Sobolev
space of order $s$, defined on a domain $D$, and $H_{0}^{s}(D)$ denotes the
closure of $C_{0}^{\infty }(D)$ in the $H^{s}(D)$ norm which we denote by $%
\Vert \cdot \Vert _{H^{s}(D)}$ or $\Vert \cdot \Vert _{s,D}$. We make use of
the standard notation for the trace of functions defined on a Lipschitz
domain $D$; i.e. for a scalar function $\phi \in H^{1}(D)$, we denote $%
\gamma (w)$ to be the trace mapping from $H^{1}(D)$ to $H^{1/2}(\partial D)$%
. We will also denote pertinent duality pairings as $(\cdot ,\cdot
)_{X\times X^{\prime }}$.

\subsection{Novelty and Technical Approach}

Qualitative properties of the composite (multilayered) version of coupled heat-wave PDE systems are currently of acute interest \cite{SunBorMulti, AGM, AA, RD}. In the benchmark composite structure-fluid interaction work \cite{SunBorMulti}, the wellposedness of a coupled  2D (thick layer) wave equation and 1D wave equation (thin layer), which interacts via a 2D fluid PDE, was established using a partitioned, loosely coupled scheme. Subsequently, by way of gaining a qualitative understanding of such FSI systems, the authors in \cite{AGM} undertook an investigation of the (higher dimensional) 3D fluid (heat)-2D (thin layer) wave-3D (thick) wave coupled PDE system. In this work, strongly continuous semigroup wellposedness of the multilayered FSI model, in the natual finite energy space, was established. Since the coupled system under study in (\ref{2a})-(\ref%
{IC}) is ``canonical",-- in particular, the fluid PDE is taken to be a heat equation,-- the strategy to show maximality of the corresponding semigroup generator required a relatively straightforward invocation of the Lax-Milgram Theorem.
\medskip

\noindent Unlike the papers mentioned above, in the present manuscript, the presence of the pressure term in the 3D fluid (Stokes) equation introduces an additional solution variable and the need to eliminate it presents a great challenge in the analysis. The essential difficulty lies in showing the maximality (range condition) of the semigroup generator, and the need to effectively address the pressure component. This FSI pressure cannot be eliminated by the Leray projector as for uncoupled flows. 

\noindent In this regard, the main challenges associated with our analysis and the novelties in this manuscript are as follows:\\

\textbf{i)} \textit{Elimination of the pressure term:}  We will establish semigroup wellposedness of the given coupled PDE system. In particular, we will associate solutions of the multilayered FSI dynamics with a $C_0$-semigroup of contractions. In this connection, our major challenge in this work becomes establishing the maximality of the candidate semigroup generator, due to the presence of the pressure variable in the fluid (Stokes) equation component of the coupled dynamics. In particular, since the boundary coupling between fluid and elastic structure components rule out the application of the Leray/Helmoltz projector, (which is appropriate for non-slip boundary conditions), we need to eliminate the pressure term via a nonstandard approach. This is actually the crux of the matter, which we address as a first step in our analysis. This approach is suggested by previous FSI control theory works, which is based on identifying the variable $p(x,t)$ as the solution to a certain elliptic boundary value problem (BVP). By critically utilizing this approach, we express the pressure variable via the other  fluid,  and ``thin" and ``thick" layer solution variables, as well as given boundary data. 

\medskip

\textbf{ii)} \textit{Formulation of nonstandard mixed variational problem and Babuska-Brezzi Approach:} Having eliminated the pressure variable from the fluid (Stokes) equation, our next step is to generate a mixed (nonstandard) variational formulation. The solvability of this saddle point problem relies on the Babuska-Brezzi (inf-sup) approach.  Indeed, this inf-sup approach will establish the maximal dissipativity (range condition) of the semigroup generator of the coupled system. At this point, we emphasize that because of the matching fluid and structure velocities, as well as the presence of “thin” elastic dynamics on the boundary interface, the argument we follow here for the necessary Babuska-Brezzi (inf-sup) formulation is very different than that given for single layered FSI, or uncoupled fluid flows. In particular, \textbf{(i)} our mixed variational formulation is created with respect to ``thin" and ``thick" layer elastic solution components, \underline{not} the Stokes flow component; \textbf{(ii)} the underlying constraint is coming from the matching velocities, and it is \textit{not} the usual divergence free constraint for incompressible flows. Subsequently, once the structure variables are solved via the mentioned mixed variational formulation, the fluid variable is constructed via the resolved ``thick" and ``thin" structural PDE components.
\medskip

\textbf{iii)} \textit{Establishment of the inf-sup condition:} Solvability of the mixed variational problem obtained from the static (resolvent) equations relies on the so called inf-sup condition of the Babuska-Brezzi Theorem, and is the key in resolving mixed variational equations. However, since our formulation is driven by the structure components, and \textit{not} the fluid, the argument followed here for the necessary inf-sup estimate is also different than that for uncoupled fluids. To establish the inf-sup estimate, we consider an elliptic BVP with the normal vector as a source term, and use the solution to the mixed variational problem to derive the estimate and the Babuska-Brezzi constant $\beta >0.$  

\section{Preliminaries and the elimination of the pressure}

\noindent The objective here is to establish the wellposedness of the system (\ref{2a})-(\ref{IC}%
) by equating it with a strongly continuous semigroup posed on the aforesaid finite energy space $\mathbf{H}.$ In this regard, the first task to
be undertaken is to eliminate the pressure variable in (\ref{2a})-(\ref{IC}%
). To this end, we note that if $p$ is a valid pressure for the composite structure FSI
system (\ref{2a})-(\ref{IC}), then after applying the divergence operator to the Stokes equation in
(\ref{2a}), and using the fact that $u$ is solenoidal we have that the
(pointwise) pressure variable $p(t)$ is harmonic; i.e.,%
\begin{equation}
\Delta p(t)=0\text{ \ \ \ in }\Omega _{f}.  \label{p-1}
\end{equation}
Now, multiplying (\ref{2a}) by $\nu |_{\Gamma _{s}}$ and using the matching velocity
condition in (\ref{2d}), we obtain the following boundary condition for the
pressure variable $p:$%
\begin{equation}
p+\frac{\partial p}{\partial \nu }=\text{div}(\nabla (u)+\nabla
^{T}(u))\cdot \nu |_{\Gamma _{s}}+[(\nabla u+\nabla ^{T}u)\cdot \nu -\Delta
_{\Gamma _{s}}(h)-\nu \cdot \sigma (w)|_{\Gamma _{s}}]\cdot \nu |_{\Gamma
_{s}}. \label{p-2}
\end{equation}%
Accordingly, the pressure variable $p(t),$ as the solution of (\ref{p-1}%
)-(\ref{p-2}), can formally be written pointwise in time as%
\[
p(t)=\mathcal{P}_{1}(u(t))+\mathcal{P}_{2}(h(t))+\mathcal{P}_{3}(w(t)) \label{9.5}
\]%
where the harmonic functions $\mathcal{P}_{1}(u(t)),$ $\mathcal{P}_{2}(h(t))$, and $\mathcal{P}_{3}(w(t))$
solve the following elliptic BVPs:%

\begin{equation}
\left\{ 
\begin{array}{l}
\Delta \mathcal{P}_{1}(u) =0\text{ \ \ \ \ \  \ \ \ \ \ \ \ \ \ \ \ \ \ \ \ \ \ \ \ \ \ \ \ \ \ \ \ \  in \ \  }\Omega _{f}   \\
\mathcal{P}_{1}(u)=([(\nabla u+\nabla ^{T}u)]\cdot
\nu )\cdot \nu |_{\Gamma _{s}}\text{\ \ \ \ \ \ on \ \  }\Gamma _{s},\\
\frac{\partial \mathcal{P}_{1}(u)}{\partial \nu } =\text{div}(\nabla
(u)+\nabla ^{T}(u))\cdot \nu |_{\Gamma _{f}}\text{\ \ \ \ \ on \ \  }\Gamma _{f},
\end{array}%
\right. \label{9.6}
\end{equation}
 
\begin{equation}
\left\{ 
\begin{array}{l}
\Delta \mathcal{P}_{2}(h) =0\text{ \ \ \ \ \ \ \ \ \ \ \ \ \ \ \ \ \ \ \  in \ \  }\Omega _{f}  \\
\mathcal{P}_{2}(h)=-\Delta _{\Gamma
_{s}}(h)\cdot \nu |_{\Gamma _{s}}\text{\ \ \ \ \ on \ \  }\Gamma _{s},\\
\frac{\partial \mathcal{P}_{2}(h)}{\partial \nu } =0\text{\ \ \ \ \ \ \ \ \ \ \ \ \ \ \ \ \ \ \ \ \ \  on \ \  }\Gamma _{f},
\end{array}%
\right. \label{9.7}
\end{equation}%
and 
\begin{equation}
\left\{ 
\begin{array}{l}
\Delta \mathcal{P}_{3}(w) =0\text{ \ \ \ \ \ \ \ \ \ \ \ \ \ \ \ \ \ \ \ \ \ \ \ \ \  in \ \  }\Omega _{f}  \\
\mathcal{P}_{3}(w)=-[\nu \cdot \sigma
(w)|_{\Gamma _{s}}]\cdot \nu |_{\Gamma _{s}}\text{\ \ \ \ \ on \ \  }\Gamma _{s}\\
\frac{\partial \mathcal{P}_{3}(w)}{\partial \nu } =0\text{\ \ \ \ \ \ \ \ \ \ \ \ \ \ \ \ \ \ \ \ \ \ \ \ \ \ \ \  on \ \  }\Gamma _{f},
\end{array}%
\right. \label{9.8}
\end{equation}%
These $\mathcal{P}_{i}$ functions, defined as the solutions to above harmonic equations, allow us
to eliminate the pressure term in the system (\ref{2a})-(\ref{IC}). As such, the
pressure-free system can now be associated with an abstract ODE in Hilbert
space $\mathbf{H}$:

\begin{equation}
\left\{ 
\begin{array}{l}
\frac{d}{dt}\Phi (t)=\mathbf{A}\Phi (t)\\
\Phi (0)=\Phi _{0}.
\label{ODE}
\end{array}%
\right. 
\end{equation}
where $\Phi (t)=\left[ u(t),h(t),h_{t}(t),w(t),w_{t}(t)\right]
,$ $\Phi _{0}=[u_{0},h_{0},h_{1},w_{0},w_{1}].$ Here, the operator $\mathbf{A}:D(\mathbf{A})\subset 
\mathbf{H}\rightarrow \mathbf{H}$ is defined by%
\begin{equation}
\mathbf{A}=\left[ 
\begin{array}{ccccc}
\text{div}(\nabla (.)+\nabla ^{T}(.)) & 0 & 0 & 0 & 0 \\ 
0 & 0 & I & 0 & 0 \\ 
-[\nu \cdot (\nabla (.)+\nabla ^{T}(.))]|_{\Gamma _{s}} & \Delta _{\Gamma
_{s}}(.) & 0 & \nu \cdot \sigma (\cdot )|_{\Gamma _{s}} & 0 \\ 
0 & 0 & 0 & 0 & I \\ 
0 & 0 & 0 & \text{div}\sigma (\cdot )-I & 0%
\end{array}%
\right]   \nonumber
\end{equation}%
\begin{equation}
+\left[ 
\begin{array}{ccccc}
\text{-}\nabla \mathcal{P}_{1}(.) & \text{-}\nabla \mathcal{P}_{2}(.) & 0 & \text{-}\nabla
\mathcal{P}_{3}(.) & 0 \\ 
0 & 0 & 0 & 0 & 0 \\ 
\mathcal{P}_{1}(.) \nu  & \mathcal{P}_{2}(.) \nu  & 0 & \mathcal{P}_{3}(.) \nu  & 0 \\ 
0 & 0 & 0 & 0 & 0 \\ 
0 & 0 & 0 & 0 & 0%
\end{array}%
\right].  \label{gener}
\end{equation}%
The domain $D(\mathbf{A})$ of $\mathbf{A}$ is characterized as follows: $\left[ u_{0},h_{0},h_{1},w_{0},w_{1}\right] \in D(\mathbf{A}) \Longleftrightarrow $ \\

\noindent \textbf{(A.i)} \text{ \ } $u_{0}\in [H^{1}(\Omega _{f})]^3,$
\text{ \ \  }$h_{1}\in [H^{1}(\Gamma _{s})]^2,$\text{ \ \  }%
$w_{1}\in [H^{1}(\Omega _{s})]^3,$

\vspace{0.3cm}

\noindent \textbf{(A.ii)} There exists an associated $L^2(\Omega_f)$-function $p_0=p_0(u_0,h_0,w_0)$ such that $$[\text{div}(\nabla u_{0}+\nabla
^{T}u_{0})-\nabla p_{0}]\in L^{2}(\Omega _{f}).$$ Consequently, $p_0$ is harmonic and so \\ \\
\textbf{(a)} $[p_{0}|_{\partial \Omega_f},\frac{\partial p_0}{\partial \nu}|_{\partial \Omega_f}]\in H^{-\frac{1}{2}}(\partial \Omega_f)\times H^{-\frac{3}{2}}(\partial \Omega_f);$\\
\textbf{(b)} $(\nabla u_{0}+\nabla
^{T}u_{0})\cdot \nu \in H^{-\frac{3}{2}}(\partial \Omega _{f}),$

 \vspace{0.3cm}

\noindent \textbf{(A.iii)} \text{div}$\sigma
(w_{0})\in L^{2}(\Omega _{s});$ \text{ \  consequently,\  } $\nu \cdot \sigma
(w_{0}) \in H^{-\frac{1}{2}}(\Gamma_s),$

 \vspace{0.3cm}
 
\noindent \textbf{(A.iv)}  $\Delta _{\Gamma _{s}}(h_{0})+[\nu
\cdot \sigma (w_{0})]|_{\Gamma _{s}}-[(\nabla u_{0}+\nabla ^{T}u_{0})\cdot
\nu ]|_{\Gamma _{s}}+[p_{0} \nu ]|_{\Gamma _{s}}\in L^{2}(\Gamma _{s}),$

\vspace{0.3cm}

\noindent \textbf{(A.v)} $u_{0}|_{\Gamma _{f}}=0,\ \
u_{0}|_{\Gamma _{s}}=h_{1}=w_{1}|_{\Gamma _{s}}$

 \vspace{0.3cm}

\noindent Note that this associated pressure function $p_0$ can be identified explicitly, via \begin{equation} p_0=\mathcal{P}_{1}(u_0)+\mathcal{P}_{2}(h_0)+\mathcal{P}_{3}(w_0), \label{11.5} \end{equation} where the $\mathcal{P}_{i}(.)s$ solve the problems given in \eqref{9.6}-\eqref{9.8}.

\section{Main Result: Wellposedness of the Stokes-Wave-Lam$\acute{e}$ PDE System}

The main result of this manuscript is to show that the system (\ref{2a})-(%
\ref{IC}), or equivalently the abstract ODE system \eqref{ODE} may be associated with a $C_0-$semigroup $\{e^{\mathbf{A}t}\}_{t\geq 0},$ where $\mathbf{A}:D(\mathbf{A})\subset 
\mathbf{H}\rightarrow \mathbf{H}$ is the matrix operator defined in \eqref{gener}. To this end, we will construct a mixed variational formulation which is necessarily predicated on the ``thick" and ``thin" structural PDE components. This is quite different than the inf-sup formulations which have been derived for uncoupled Stokes flow; see e.g; \cite{BF}. There is no choice in the matter: The proper mixed variational formulation must be driven here by the structural PDE components, although the associated bilinear form --a(.,.)-- written in Theorem \ref{BB} below-- necessarily takes into account the presence in (\ref{2a})-(%
\ref{IC}) of Stokes flow. Ultimately, we will arrive at an inf-sup system of the classical form \eqref{12.5}, for which we will apply the Babuska-Brezzi Theorem, whose statement here is recalled for the reader's convenience:
\begin{theorem}
\cite{kesevan}\label{BB} (Babuska-Brezzi) Let $\Sigma ,$ $V$ be Hilbert spaces and $a:\Sigma \times
\Sigma \rightarrow 
\mathbb{R}
,$ $b:\Sigma \times V\rightarrow 
\mathbb{R}
$ bilinear forms which are continuous. Let%
\[
Z=\left\{ \sigma \in \Sigma :b(\sigma ,v)=0,\text{ \ for every }v\in
V\right\} .
\]%
Assume that $a(\cdot ,\cdot )$ is $Z$-elliptic, i.e., there exists a
constant $\alpha >0$ such that 
\[
a(\sigma ,\sigma )\geq \alpha \left\Vert \sigma \right\Vert _{\Sigma }^{2},%
\text{ \ \ for every }\alpha \in Z.\text{ }
\]%
Assume further that there exists a constant $\beta >0$ such that%
\[
\sup_{\tau \in \Sigma }\frac{b(\tau ,v)}{\left\Vert \tau \right\Vert
_{\Sigma }}\geq \beta \left\Vert v\right\Vert _{V},\text{ \ \ for every }%
v\in V.
\]%
Then if $\kappa \in \Sigma $ and $l\in V,$ there exists a unique pair $%
(\sigma ,u)\in \Sigma \times V$ such that%
\begin{equation}
\left\{ 
\begin{array}{l}
a(\sigma ,\tau )+b(\tau ,u)=(\kappa ,\tau )\text{ \ \ \ }\forall \text{ }%
\tau \in \Sigma \\

b(\sigma ,v)=(l,v)\text{ \ \ \ \ \ \ \ \ \ \ \ \ \ \ \ \ }\forall \text{ }v\in V.
\end{array}
\right. \label{12.5}
\end{equation}
\end{theorem}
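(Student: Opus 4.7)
The plan is to reformulate the saddle-point system \eqref{12.5} in operator form and then invert it by using the inf-sup estimate to handle the constraint equation and the $Z$-ellipticity of $a(\cdot,\cdot)$ to handle the coercive part. Via the Riesz representation theorem, I would introduce bounded linear operators $A:\Sigma\to\Sigma$ and $B:\Sigma\to V$ defined by
\[ (A\sigma,\tau)_{\Sigma}=a(\sigma,\tau) \quad\text{and}\quad (B\sigma,v)_{V}=b(\sigma,v), \]
with Hilbert-space adjoint $B^{*}:V\to\Sigma$ satisfying $(B^{*}v,\tau)_{\Sigma}=b(\tau,v)$. After identifying $l\in V$ via Riesz, the abstract system becomes $A\sigma+B^{*}u=\kappa$ and $B\sigma=l$.

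Next, the inf-sup inequality translates into the two-sided bound $\|B^{*}v\|_{\Sigma}\geq\beta\|v\|_{V}$, so that $B^{*}$ is injective with closed range. By the closed range theorem, $\operatorname{ran}(B)=V$ (since $\ker B^{*}=\{0\}$) and $\operatorname{ran}(B^{*})=(\ker B)^{\perp}=Z^{\perp}$; in particular, $B|_{Z^{\perp}}:Z^{\perp}\to V$ is a continuous bijection. Using this, I would pick the unique $\sigma_{0}\in Z^{\perp}$ with $B\sigma_{0}=l$, and then seek $\sigma$ in the form $\sigma=\sigma_{0}+\sigma_{Z}$ with $\sigma_{Z}\in Z$. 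Testing the first equation of \eqref{12.5} against $\tau\in Z$ kills the $b(\tau,u)$ contribution and reduces the problem to finding $\sigma_{Z}\in Z$ such that
\[ a(\sigma_{Z},\tau)=(\kappa,\tau)_{\Sigma}-a(\sigma_{0},\tau)\quad\forall\tau\in Z, \]
which has a unique solution by the Lax-Milgram lemma applied on the closed subspace $Z$, where $a(\cdot,\cdot)$ is coercive and continuous.

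To recover $u$, I would consider the residual functional $F(\tau):=(\kappa,\tau)_{\Sigma}-a(\sigma_{0}+\sigma_{Z},\tau)$ on $\Sigma$. By the construction of $\sigma_{Z}$, $F$ annihilates $Z=\ker B$, so via the Riesz identification $F\in Z^{\perp}=\operatorname{ran}(B^{*})$. Hence there exists $u\in V$, unique by injectivity of $B^{*}$, with $B^{*}u=F$, which is exactly the missing equation $a(\sigma,\tau)+b(\tau,u)=(\kappa,\tau)_{\Sigma}$ for every $\tau\in\Sigma$. This produces the required pair $(\sigma,u)$.

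The main obstacle to making this argument airtight is the passage from the inf-sup bound to the exact identity $\operatorname{ran}(B^{*})=Z^{\perp}$: one needs closedness of $\operatorname{ran}(B^{*})$ (which follows from the lower bound $\|B^{*}v\|_{\Sigma}\geq\beta\|v\|_{V}$ via a Cauchy-sequence argument) together with the closed range theorem to conclude equality, not merely density. Once this is in hand, uniqueness is routine: if $(\sigma,u)$ solves the homogeneous system, then $B\sigma=0$ puts $\sigma\in Z$, testing the first equation against $\sigma\in Z$ removes the $b(\sigma,u)$ term, and $Z$-ellipticity forces $\sigma=0$; the inf-sup condition then forces $u=0$.
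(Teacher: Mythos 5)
The paper states Theorem \ref{BB} as a quoted result from \cite{kesevan} and supplies no proof of its own, so there is no in-text argument to compare against; I evaluate your proposal on its own merits. Your proof is correct and is the standard operator-theoretic derivation: you recast the saddle-point system via Riesz operators $A$ and $B$ with adjoint $B^{*}$, read the inf-sup hypothesis as the lower bound $\|B^{*}v\|_{\Sigma}\ge\beta\|v\|_{V}$ (hence $B^{*}$ is injective with closed range), invoke the closed range theorem to get $\operatorname{ran}(B^{*})=(\ker B)^{\perp}=Z^{\perp}$ and $\operatorname{ran}(B)=V$, lift the constraint $B\sigma=l$ by the unique $\sigma_{0}\in Z^{\perp}$, solve the reduced problem on $Z$ by Lax--Milgram using $Z$-ellipticity (note $a$ need not be symmetric, and Lax--Milgram does not require it, so this is fine), and recover the multiplier $u$ because the residual functional annihilates $Z$ and therefore lies in $Z^{\perp}=\operatorname{ran}(B^{*})$. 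The Cauchy-sequence argument you flag is precisely the right justification for closedness of $\operatorname{ran}(B^{*})$, and your uniqueness argument (homogeneous constraint forces $\sigma\in Z$, then $Z$-ellipticity kills $\sigma$, then injectivity of $B^{*}$ kills $u$) is clean. No gaps.
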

This inf-sup result will be invoked below to recover the ``thick" and ``thin" structural variables $[h_0,h_1,w_0,w_1]$ of the solution of the abstract resolvent equation \eqref{13.5} below, which is formally a frequency domain version of the time dependent system (\ref{2a})-(%
\ref{IC}). Subsequently, we will reconstruct (from $[h_1,w_1]$) the fluid and pressure variables $\{u_0,p_0\}$ of the solution to \eqref{13.5}, and moreover show that this fluid-structure interaction solution is in $D(\mathbf{A})$, where $\mathbf{A}$ is the matrix generator defined in \eqref{gener}. In this ``post-processing" work, we will require the following Lemma, the proof of which closely follows from that of Proposition 2 of \cite{AD}:  

\begin{lemma}
(Elliptic Regularity) \label{reg} Given a vector valued function $\mu \in \lbrack
H^{1}(\Omega _{f})]^{d}\cap Null($div), suppose there exists a scalar valued
function $\rho \in L^{2}(\Omega _{f})$ which satisfies%
\[
-\nabla \cdot (\nabla \mu +\nabla \mu ^{T})+\nabla \rho \in Null(\text{div}),
\]%
where the subspace $Null($div$)$ is defined as%
\[
Null(\text{div})=\left\{ f\in \lbrack L^{2}(\Omega _{f})]^{d}:\text{div}(f)=0%
\text{ \ in }\Omega _{f}\right\} .
\]%
Then, $\rho $ is harmonic ($\Delta \rho =0$ in $\Omega _{f}),$ and one has
the following additional boundary regularity for the pair $(\mu ,\rho ):$%
\[
\rho |_{\partial \Omega _{f}}\in H^{-1/2}(\partial \Omega _{f}),\text{ \ \ \ 
}\frac{\partial \rho }{\partial \nu }|_{\partial \Omega _{f}}\in
H^{-3/2}(\partial \Omega _{f});
\]%
\[
(\nabla \mu +\nabla \mu ^{T})\cdot \nu |_{\partial \Omega _{f}}\in \lbrack
H^{-1/2}(\partial \Omega _{f})]^{d}
\]%
\[
\lbrack \nabla \cdot (\nabla \mu +\nabla \mu ^{T})]\cdot \nu |_{\partial
\Omega _{f}}\in H^{-3/2}(\partial \Omega _{f}).
\]
\end{lemma}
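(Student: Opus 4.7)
The plan is to treat the four assertions in order, starting from the harmonicity of $\rho$, from which all the trace statements then follow through standard duality/Green's formula arguments in the spirit of Lions--Magenes. First, since $\mu \in [H^{1}(\Omega_f)]^d$ and $\mathrm{div}(\mu)=0$, one has $\nabla\cdot(\nabla\mu+\nabla\mu^{T})=\Delta\mu+\nabla(\mathrm{div}\,\mu)=\Delta\mu$, and therefore $\nabla\cdot\nabla\cdot(\nabla\mu+\nabla\mu^{T})=\Delta(\mathrm{div}\,\mu)=0$ in the sense of distributions. Applying $\nabla\cdot$ to the identity $-\nabla\cdot(\nabla\mu+\nabla\mu^{T})+\nabla\rho=g$, with $g\in\mathrm{Null}(\mathrm{div})$, then yields $\Delta\rho=0$ in $\Omega_f$.

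For the two scalar boundary traces of $\rho$, I would use the standard duality device for harmonic $L^{2}$ functions. Given $\varphi\in H^{1/2}(\partial\Omega_f)$, pick $\psi\in H^{2}(\Omega_f)$ solving $\Delta\psi\in L^{2}$ with $\psi|_{\partial\Omega_f}=0$ and $\partial_\nu\psi|_{\partial\Omega_f}=\varphi$, with $\|\psi\|_{H^2}\le C\|\varphi\|_{H^{1/2}}$ (via elliptic regularity in a $C^2$ domain). Green's identity together with $\Delta\rho=0$ defines
$$\langle\rho|_{\partial\Omega_f},\varphi\rangle := \int_{\Omega_f}\rho\,\Delta\psi\,dx,$$
and this is bounded by $C\|\rho\|_{L^2}\|\varphi\|_{H^{1/2}}$, giving $\rho|_{\partial\Omega_f}\in H^{-1/2}(\partial\Omega_f)$. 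In the same spirit, for $\varphi\in H^{3/2}(\partial\Omega_f)$ I would take $\psi\in H^{2}(\Omega_f)$ with $\psi|_{\partial\Omega_f}=\varphi$, $\|\psi\|_{H^2}\le C\|\varphi\|_{H^{3/2}}$, and set
$$\langle\partial_\nu\rho|_{\partial\Omega_f},\varphi\rangle := \langle\rho|_{\partial\Omega_f},\partial_\nu\psi\rangle_{\partial\Omega_f}-\int_{\Omega_f}\rho\,\Delta\psi\,dx,$$
yielding $\partial_\nu\rho|_{\partial\Omega_f}\in H^{-3/2}(\partial\Omega_f)$.

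For the vector traces, the key observation is to combine $\mu$ and $\rho$ into the Cauchy-type stress tensor $\mathbb{T}:=\nabla\mu+\nabla\mu^{T}-\rho I$. From the hypothesis, $\mathbb{T}\in [L^{2}(\Omega_f)]^{d\times d}$ and $\nabla\cdot\mathbb{T}=-g\in [L^{2}(\Omega_f)]^{d}$. Hence, via the standard distributional Green identity dualized against lifts of $H^{1/2}$ boundary data, $\mathbb{T}\cdot\nu\in[H^{-1/2}(\partial\Omega_f)]^{d}$. Since $\rho|_{\partial\Omega_f}\in H^{-1/2}$ was just established, writing $(\nabla\mu+\nabla\mu^{T})\cdot\nu = \mathbb{T}\cdot\nu+\rho\nu$ gives $(\nabla\mu+\nabla\mu^{T})\cdot\nu\in[H^{-1/2}(\partial\Omega_f)]^{d}$.

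Finally, the identity $\nabla\cdot(\nabla\mu+\nabla\mu^{T})=\nabla\rho-g$ gives at the boundary $[\nabla\cdot(\nabla\mu+\nabla\mu^{T})]\cdot\nu=\partial_\nu\rho-g\cdot\nu$. The first term lies in $H^{-3/2}$ by the preceding step; for the second, $g\in[L^{2}(\Omega_f)]^{d}$ with $\mathrm{div}\,g=0$ gives the standard normal trace $g\cdot\nu\in H^{-1/2}(\partial\Omega_f)\hookrightarrow H^{-3/2}(\partial\Omega_f)$, completing the last assertion. The main subtle point, and the only step that requires care, is in rigorously justifying the distributional Green identities at exactly the regularity indicated; this is where invoking the $C^2$ regularity of $\partial\Omega_f$ (to solve the auxiliary elliptic problems giving the $H^2$ lifts of boundary data) and the Lions--Magenes trace theory does the work. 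The remaining arithmetic—subtracting off $\rho\nu$ from the Cauchy stress and isolating $\partial_\nu\rho$—is formal.
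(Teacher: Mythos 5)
The paper does not actually prove Lemma \ref{reg}; it explicitly defers to Proposition~2 of \cite{AD}, so there is no in-text argument to compare against. Your proposal is nonetheless correct and is almost certainly the same route as the cited reference: deduce harmonicity of $\rho$ by taking a further divergence (using $\mathrm{div}\,\mu=0$ to collapse $\nabla\cdot(\nabla\mu+\nabla\mu^{T})$ to $\Delta\mu$), obtain the two scalar traces of the harmonic $L^{2}$ function $\rho$ by Lions--Magenes duality against $H^{2}$ lifts, and recover the vector normal trace by packaging the Cauchy stress $\mathbb{T}=\nabla\mu+\nabla\mu^{T}-\rho I$, which places both $\mathbb{T}$ and $\nabla\cdot\mathbb{T}=-g$ in $L^{2}$. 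That last move is the genuinely necessary idea here: $\nabla\cdot(\nabla\mu+\nabla\mu^{T})$ alone is only $H^{-1}$, so one cannot invoke the $H(\mathrm{div})$ normal-trace theorem on $\nabla\mu+\nabla\mu^{T}$ directly, and you supply the right substitute.

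One technical caveat to be aware of (it concerns the lemma as stated at least as much as your proof): the split $(\nabla\mu+\nabla\mu^{T})\cdot\nu=\mathbb{T}\cdot\nu+\rho\nu$ requires that multiplication by the unit normal $\nu$ map $H^{-1/2}(\partial\Omega_f)$ into itself, which holds when $\nu$ is Lipschitz, i.e.\ on the $C^{2}$ interface $\Gamma_s$, but is delicate on a merely Lipschitz outer boundary $\Gamma_f$ where $\nu$ is only $L^{\infty}$. The same remark applies to your use of $H^{2}(\Omega_f)$ lifts with prescribed trace and normal derivative, which presupposes the surjectivity of the pair-trace map on $\partial\Omega_f$. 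In the application made downstream in the paper the fluid velocity vanishes on $\Gamma_f$ and only the trace on $\Gamma_s$ is used, so this does not affect the main theorem, but if one reads the lemma literally over all of $\partial\Omega_f$ these points should be acknowledged.
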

We  now give our main result: 

\begin{theorem}
\label{well}With reference to problem (\ref{2a})-(\ref{IC}), the operator $\mathbf{A}:D(\mathbf{A})\subset \mathbf{H}%
\rightarrow \mathbf{H}$, defined in (\ref{gener}), generates a $%
C_{0}$-semigroup of contractions on $\mathbf{H}$. Consequently, the solution\\ 
$\Phi (t)=\left[ u(t),h(t),h_{t}(t),w(t),w_{t}(t)\right] $ of (\ref{2a})-(\ref{IC}), or
equivalently (\ref{ODE}), is given by 
\[
\Phi (t)=e^{\mathbf{A}t}\Phi _{0}\in C([0,T];\mathbf{H})\text{,}
\]%
where $\Phi _{0}=\left[ u_{0},h_{0},h_{1},w_{0},w_{1}\right] \in \mathbf{H}$.
\end{theorem}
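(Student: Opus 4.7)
The plan is to apply the Lumer-Phillips theorem to the operator $\mathbf{A}$, which requires three ingredients: density of $D(\mathbf{A})$ in $\mathbf{H}$, dissipativity of $\mathbf{A}$, and the range (maximality) condition $\text{range}(\lambda I-\mathbf{A})=\mathbf{H}$ for some $\lambda>0$. Density is a routine matter, obtained by approximating data in $\mathbf{H}$ by sufficiently smooth functions satisfying the matching velocity and divergence-free constraints. Dissipativity reduces to computing $\langle\mathbf{A}\Phi,\Phi\rangle_{\mathbf{H}}$ for $\Phi\in D(\mathbf{A})$: integrating by parts in the Stokes term (using $\text{div}\,u_0=0$ and $u_0|_{\Gamma_f}=0$), in the Lamé term against $\epsilon(w_0)$, and in the Laplace-Beltrami term against $\nabla_{\Gamma_s}h_0$, then exploiting the matching velocity identity $u_0|_{\Gamma_s}=h_1=w_1|_{\Gamma_s}$, I expect all boundary couplings (including the pressure-normal contributions coming from the $\mathcal{P}_i$ blocks) to cancel, leaving only the non-positive contribution $-\|\nabla u_0+\nabla^T u_0\|^2_{\Omega_f}$. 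Thus $\text{Re}\langle\mathbf{A}\Phi,\Phi\rangle_{\mathbf{H}}\leq 0$.

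The bulk of the proof is the range condition. Given $F=[f_1,f_2,f_3,f_4,f_5]\in\mathbf{H}$, I would seek $\Phi_0=[u_0,h_0,h_1,w_0,w_1]\in D(\mathbf{A})$ solving $(\lambda I-\mathbf{A})\Phi_0=F$ for some fixed $\lambda>0$ (e.g. $\lambda=1$). Using the second and fourth rows, eliminate the velocity unknowns via $h_1=\lambda h_0-f_2$ and $w_1=\lambda w_0-f_4$, so everything reduces to determining $(u_0,h_0,w_0)$. The pressure is eliminated by the formula $p_0=\mathcal{P}_1(u_0)+\mathcal{P}_2(h_0)+\mathcal{P}_3(w_0)$ derived in \eqref{11.5}. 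The resulting system is recast, as announced, as a mixed variational problem of Babuska-Brezzi type: the principal space $\Sigma$ consists of admissible structural pairs $(h,w)\in[H^1(\Gamma_s)]^2\times[H^1(\Omega_s)]^3$ with $w|_{\Gamma_s}=h$, the multiplier space $V$ encodes the Stokes velocity field $u$ in $[H^1(\Omega_f)]^3\cap\text{Null}(\text{div})$ with $u|_{\Gamma_f}=0$, the form $a(\cdot,\cdot)$ on $\Sigma\times\Sigma$ collects the wave/Lamé contributions together with the nonlocal pressure coupling through the $\mathcal{P}_i$ operators, and the form $b(\cdot,\cdot)$ on $\Sigma\times V$ encodes the matching velocity constraint $u|_{\Gamma_s}=\lambda h$ (rather than the usual incompressibility constraint).

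Two hypotheses of Theorem \ref{BB} must then be verified. $Z$-ellipticity of $a(\cdot,\cdot)$ is expected to follow from the coercivity of the Lamé form $\langle\sigma(\cdot),\epsilon(\cdot)\rangle_{\Omega_s}+\|\cdot\|^2_{\Omega_s}$ together with the $H^1(\Gamma_s)$-coercivity inherited from $\nabla_{\Gamma_s}$, noting that when $(h,w)\in Z$ the coupling with the Stokes field and the $\mathcal{P}_i$ boundary contributions become manageable (they generate only lower-order or sign-compatible terms on the constraint subspace). The genuine obstacle, and the step I expect to be the hardest, is the \emph{inf-sup condition} on $b(\cdot,\cdot)$: because the formulation is driven by the structural variables rather than by the fluid, the usual Stokes inf-sup arguments from \cite{BF} do not apply. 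Following the strategy announced in the introduction, I would establish it by a constructive argument: given a target $(h,w)\in\Sigma$, construct a Stokes-admissible lift $u\in V$ realizing the matching velocity, via an auxiliary elliptic boundary value problem with the normal vector $\nu$ as source data, and then trace-bound the resulting velocity to extract the Babuska-Brezzi constant $\beta>0$.

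Once Theorem \ref{BB} produces a unique $\Sigma\times V$ solution, it remains to \emph{post-process}: recover $h_1,w_1$ from the already-known relations, identify the associated pressure $p_0$ via \eqref{11.5}, and finally verify that the constructed $\Phi_0$ lies in $D(\mathbf{A})$. This last step is where Lemma \ref{reg} intervenes, giving the needed elliptic regularity to place $p_0|_{\partial\Omega_f}\in H^{-1/2}$, $\partial_\nu p_0|_{\partial\Omega_f}\in H^{-3/2}$ and $(\nabla u_0+\nabla^T u_0)\cdot\nu\in H^{-3/2}(\partial\Omega_f)$, from which conditions \textbf{(A.ii)}--\textbf{(A.iv)} follow. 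Condition \textbf{(A.v)} is built into the choice of $\Sigma$ and $V$. With all three Lumer-Phillips hypotheses verified, the semigroup of contractions $\{e^{\mathbf{A}t}\}_{t\geq 0}$ on $\mathbf{H}$ is produced, and the continuous dependence $\Phi(t)\in C([0,T];\mathbf{H})$ follows.
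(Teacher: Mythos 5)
Your high-level outline matches the paper's: Lumer-Phillips, dissipativity via matching velocities, maximality via a Babuska-Brezzi saddle-point problem driven by the structural variables, and post-processing with Lemma~\ref{reg} to confirm membership in $D(\mathbf{A})$. However, you have misidentified the saddle-point structure, and this is not a cosmetic difference but the crux of the maximality argument.

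In the paper, the fluid velocity is \emph{not} the Lagrange multiplier. After writing $h_0,w_0$ in terms of $h_1,w_1$ via the first-order resolvent relations, the paper solves \emph{auxiliary} Stokes problems off-line: $[u_1(g),p_1(g)]$ with prescribed boundary trace $g$, and $[u_2(u_0^*),p_2(u_0^*)]$ with forcing $u_0^*$, each producing a pressure in $\hat L^2(\Omega_f)$, i.e.\ determined only up to an additive constant. The resolvent fluid velocity is then \emph{reconstructed} as $u_0=u_1(w_1|_{\Gamma_s})+u_2(u_0^*)$, and the only residual fluid unknown left in the variational problem is the scalar constant $c_0$ in $p_0=p_1+p_2+c_0$. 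Consequently, the Babuska-Brezzi spaces are $\Sigma=\mathbf{S}=\{(\varphi,\psi)\in[H^1(\Gamma_s)]^2\times[H^1(\Omega_s)]^3:\varphi=\psi|_{\Gamma_s}\}$ with the matching condition \emph{built into the space}, and $V=\mathbb{R}$. The constraint form is $\mathbf{b}([\widetilde{\phi},\widetilde{\xi}],r)=-r\langle\nu,\widetilde{\phi}\rangle_{\Gamma_s}$, which encodes the mean-flux compatibility $\int_{\Gamma_s}\nu\cdot h_1\,d\Gamma_s=0$, not the matching velocity constraint you propose. Your choice — $V$ a divergence-free fluid velocity space and $b$ encoding $u|_{\Gamma_s}=\lambda h$ — would keep $u$ as a genuine unknown coupled into $a(\cdot,\cdot)$ and would not collapse into the classical Babuska-Brezzi form the paper invokes; you would then also have to verify a fluid-driven inf-sup estimate for which no argument is given.

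The inf-sup argument is correspondingly different. You propose a Stokes lifting; the paper instead solves a \emph{surface} problem $\Delta_{\Gamma_s}z=\mathrm{sgn}(r)\,\nu$ on $\Gamma_s$ (solvable since $\int_{\Gamma_s}\nu\,d\Gamma_s=0$), tests with $[z,\gamma^+(z)]\in\mathbf{S}$, and after one integration by parts on $\Gamma_s$ obtains $\sup_{[\eta,\varsigma]}b([\eta,\varsigma],r)/\|[\eta,\varsigma]\|_{\mathbf{S}}\geq|r|\,\|z\|_{[H^1(\Gamma_s)]^2}$, which gives $\beta=\|z\|_{[H^1(\Gamma_s)]^2}>0$. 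Finally, a minor point: you suggest $Z$-ellipticity of $a$ requires care on the constraint subspace, but in the paper $\mathbf{a}$ is already $\mathbf{S}$-elliptic outright, because the Stokes contributions appear as the nonnegative terms $\|\nabla u_1(\xi|_{\Gamma_s})+\nabla^T u_1(\xi|_{\Gamma_s})\|^2_{\Omega_f}+\lambda\|u_1(\xi|_{\Gamma_s})\|^2_{\Omega_f}$ when $[\widetilde{\phi},\widetilde{\xi}]=[\phi,\xi]$ and $\phi=\xi|_{\Gamma_s}$. In short: right skeleton, wrong identification of what plays the role of the multiplier, and that misidentification would derail the proof as written.
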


\begin{proof}
With an obstensibly simple view of invoking the Lumer-Phillips Theorem, we will show that the generator $\mathbf{A}$ is a maximal dissipative operator. We will give our proof in two steps:\\

\noindent \textbf{STEP 1: (Dissipativity)} \\

\noindent Given the
matrix generator $\mathbf{A},$ $\Phi _{0}=\left[
u_{0},h_{0},h_{1},w_{0},w_{1}\right] \in D(\mathbf{A}),$ and $p_0$ as in \eqref{11.5}, 
\[
\left\langle \mathbf{A}\Phi ,\Phi \right\rangle _{\mathbf{H}}=\left\langle 
\text{div}(\nabla (u_{0})+\nabla ^{T}(u_{0})),u_{0}\right\rangle _{\Omega
_{f}}+\left\langle \nabla _{\Gamma _{s}}(h_{1}),\nabla _{\Gamma
_{s}}(h_{0})\right\rangle _{\Gamma _{s}}
\]%
\[
+\left\langle -(\nabla u_{0}+\nabla ^{T}u_{0})\cdot \nu |_{_{\Gamma
_{s}}},h_{1}\right\rangle _{\Gamma _{s}}+\left\langle \Delta _{\Gamma
_{s}}(h_{0}),h_{1}\right\rangle _{\Gamma _{s}}+\left\langle \sigma
(w_{0})\cdot \nu |_{\Gamma _{s}},h_{1}\right\rangle _{\Gamma _{s}}
\]%
\[
+\left\langle \sigma (w_{1}),\epsilon (w_{0})\right\rangle _{\Omega
_{s}}+\left\langle w_{1},w_{0}\right\rangle _{\Omega _{s}}+\left\langle 
\text{div}\sigma (w_{0}),w_{1}\right\rangle _{\Omega _{s}}-\left\langle
w_{0},w_{1}\right\rangle _{\Omega _{s}}
\]%
\[
-\left\langle [\nabla \mathcal{P}_{1}(u_{0})+\nabla \mathcal{P}_{2}(h_{0})+\nabla
\mathcal{P}_{3}(w_{0})],u_{0}\right\rangle _{\Omega _{f}}
\]%
\[
+\left\langle [\mathcal{P}_{1}(u_{0})\cdot \nu +\mathcal{P}_{2}(h_{0})\cdot \nu
+\mathcal{P}_{3}(w_{0})\cdot \nu ],h_{1}\right\rangle _{\Gamma _{s}}.
\]%
Applying Green's theorem, using the fact that $u_{0}$ is solenoidal, and $u_{0}=0
$ on $\Gamma _{f},$ we get 
\[
\left\langle \mathbf{A}\Phi ,\Phi \right\rangle _{\mathbf{H}}=\left\langle
(\nabla (u_{0})+\nabla ^{T}(u_{0}))\cdot \nu ,u_{0}\right\rangle _{\Gamma
_{s}}-\frac{1}{2}\left\Vert \nabla (u_{0})+\nabla ^{T}(u_{0})\right\Vert ^{2}
\]%
\[
+\left\langle \nabla _{\Gamma _{s}}(h_{1}),\nabla _{\Gamma
_{s}}(h_{0})\right\rangle _{\Gamma _{s}}-\left\langle (\nabla (u_{0})+\nabla
^{T}(u_{0}))\cdot \nu ,h_{1}\right\rangle _{\Gamma _{s}}
\]%
\[
-\left\langle \nabla _{\Gamma _{s}}(h_{0}),\nabla _{\Gamma
_{s}}(h_{1})\right\rangle _{\Gamma _{s}}+\left\langle \sigma (w_{0})\cdot
\nu |_{\Gamma _{s}},h_{1}\right\rangle _{\Gamma _{s}}
\]%
\[
+\left\langle \sigma (w_{1}),\epsilon (w_{0})\right\rangle _{\Omega
_{s}}+\left\langle w_{1},w_{0}\right\rangle _{\Omega _{s}}-\left\langle
\sigma (w_{0})\cdot \nu |_{\Gamma _{s}},w_{1}\right\rangle _{\Gamma _{s}}
\]%
\[
-\left\langle \sigma (w_{0}),\epsilon (w_{1})\right\rangle _{\Omega
_{s}}-\left\langle w_{0},w_{1}\right\rangle _{\Omega _{s}}.
\]%
Now, using the matching velocity condition given in \textbf{(A.v)} we have that%
\[
\left\langle \mathbf{A}\Phi ,\Phi \right\rangle _{\mathbf{H}}=-\frac{1}{2}%
\left\Vert \nabla (u_{0})+\nabla ^{T}(u_{0})\right\Vert ^{2}+2i\text{Im\{}%
\left\langle \nabla _{\Gamma _{s}}(h_{1}),\nabla _{\Gamma
_{s}}(h_{0})\right\rangle _{\Gamma _{s}}+\left\langle \sigma
(w_{1}),\epsilon (w_{0})\right\rangle _{\Omega _{s}}\]%
\[
+\left\langle
w_{1},w_{0}\right\rangle _{\Omega _{s}}
+\left\langle \sigma (w_{1}),\epsilon (w_{0})\right\rangle _{\Omega
_{s}}+\left\langle w_{1},w_{0}\right\rangle _{\Omega _{s}}\text{\}}
\]%
and hence 
\[
\text{Re}\left\langle \mathbf{A}\Phi ,\Phi \right\rangle _{\mathbf{H}}=-%
\frac{1}{2}\left\Vert \nabla (u_{0})+\nabla ^{T}(u_{0})\right\Vert ^{2}\leq 0,
\]%
which gives us the dissipativity of the operator $\mathbf{A.}$\\

\noindent\textbf{STEP 2: (Maximal Dissipativity)}\\

\noindent This step is the challenging part of the proof. In order to prove the maximality condition, we show that the operator $(\lambda I-\mathbf{A})$ is surjective for $\lambda >0.$ That is, we establish the range condition%
\begin{equation}
Range(\lambda I-\mathbf{A})=\mathbf{H},  \label{range}
\end{equation}%
where parameter $\lambda >0.$ Let $\Phi ^{\ast }=\left[ u_{0}^{\ast
},h_{0}^{\ast },h_{1}^{\ast },w_{0}^{\ast },w_{1}^{\ast }\right] \in \mathbf{%
H,}$ and consider the problem of finding $\Phi =\left[
u_{0},h_{0},h_{1},w_{0},w_{1}\right] \in D(\mathbf{A})$ which solves%
\begin{equation}
(\lambda I-\mathbf{A})\Phi =\Phi ^{\ast },\text{ \ \ }\lambda >0 \label{13.5}.
\end{equation}%
In PDE terms, this resolvent equation will generate the following relations, where again $p_0$ is given via \eqref{11.5}: 
\begin{equation}
\left\{ 
\begin{array}{l}
\lambda u_{0}-\text{div}(\nabla u_{0}+\nabla ^{T}u_{0})+\nabla p_0=u_{0}^{\ast
}\text{ \ \ \ in \ }\Omega _{f} \\ 
\text{div}(u_{0})=0\text{ \ \ \ \ \ \ in \ }\Omega _{f} \\ 
u_{0}|_{\Gamma _{f}}=0\text{ \ \ \ on \ }\Gamma _{f};%
\end{array}%
\right.   \label{s1}
\end{equation}

\begin{equation}
\left\{ 
\begin{array}{c}
\begin{array}{l}
\lambda h_{0}-h_{1}=h_{0}^{\ast }\text{ \ \ in }\Gamma _{s}%
\end{array}
\\ 
\lambda h_{1}+[\nu \cdot (\nabla u_{0}+\nabla ^{T}u_{0})]|_{\Gamma
_{s}}-\Delta _{\Gamma _{s}}(h_{0})-[\nu \cdot \sigma (w_0)]|_{\Gamma
_{s}}-p_0 \nu =h_{1}^{\ast }\text{ \ \ \ in\ \ }\Gamma _{s},\text{ \ \ }%
\end{array}%
\right.   \label{s2}
\end{equation}%
\begin{equation}
\left\{ 
\begin{array}{l}
\lambda w_{0}-w_{1}=w_{0}^{\ast }\text{ \ \ in \ \ \ }\Omega _{s} \\ 
\lambda w_{1}-\text{div}\sigma (w_{0})+w_{0}=w_{1}^{\ast }\text{ \ \ \ in \ }%
\Omega _{s} \\ 
w_{1}|_{\Gamma _{s}}=h_{1}=u_{0}|_{\Gamma _{s}}\text{ \ \ \ on \ }\Gamma _{s}%
\end{array}%
\right.   \label{s3}
\end{equation}%
Because of the composite structure of the coupled dynamics, and the matching fluid and structure velocities given in (\ref{s3})$_{3},$ the
static problem (\ref{s1})-(\ref{s3}) cannot be solved via mixed variational approaches which are given for
uncoupled fluid flows (see pg. 15 and pg. 107 of \cite{temam}). Hence, our proof is based on solving an appropriate
and nonstandard mixed variational problem formulated for the static PDE system (%
\ref{s1})-(\ref{s3}). We note that our mixed variational formulation is generated and solved with respect to
the ``thin" and ``thick" structure variables $h_{1}$ and $w_{1}.$ For this, we mainly appeal to
the Babuska-Brezzi approach (see Theorem \ref{BB}). Once these
variables are solved then the recovery of the
other structure solution variables $h_{0}$ and $w_{0}$ will be via the
relations given in (\ref{s2})$_1$ and (\ref{s3})$_1$ for given data $h_{0}^{\ast }\in [H^{1}(\Gamma _{s})]^2 $ and $w_{0}^{\ast }\in [H^{1}(\Omega _{s})]^3.$  i.e., 
\begin{equation}
h_{0}=\frac{1}{\lambda }h_{1}+\frac{1}{\lambda }h_{0}^{\ast }  \label{h-0}
\end{equation}%
\begin{equation}
w_{0}=\frac{1}{\lambda }w_{1}+\frac{1}{\lambda }w_{0}^{\ast }.  \label{w-0}
\end{equation}%
We also note that, because of the matching velocities boundary conditions in \eqref{s3}, $$\int_{\Gamma_s} w_1|_{\Gamma_s} d\Gamma_s=\int_{\Gamma_s} h_1 d\Gamma_s=0.$$ Before starting to generate a weak formulation for the ``thin" and ``thick" structure variables $h_{1}$ and $w_{1},$ we first proceed with the fluid component $u_{0}$: For given $g\in [H^{1/2}(\Gamma _{s})]^2,$ the unique pair  $%
[u_{1}(g),p_{1}(g)]\in [H^{1}(\Omega _{f})]^3\times \hat{L}^{2}(\Omega _{f})%
$ solves the following static Stokes
equation:
\begin{equation}
\left\{ 
\begin{array}{l}
\lambda u_{1}-\text{div}(\nabla u_{1}+\nabla ^{T}u_{1})+\nabla p_{1}=0\text{%
\ \ \ in \ }\Omega _{f} \\ 
\text{div}(u_{1})=\frac{\int\limits_{\Gamma _{s}}(g\cdot \nu )d\Gamma _{s}%
\text{ }}{meas(\Omega _{f})}\text{\ \ \ \ \ \ in \ }\Omega _{f} \\ 
u_{1}|_{\Gamma _{s}}=g\text{ \ \ \ on \ }\Gamma _{s} \\ 
u_{1}|_{\Gamma _{f}}=0\text{ \ \ \ on \ }\Gamma _{f},%
\end{array}%
\right.   \label{decomp-1}
\end{equation}%
(See \cite[pg 22, Theorem 2.4]{temam}). Here, $$ \hat{L}^{2}(\Omega _{f})=\{f\in L^2(\Omega_f): \int_{\Omega_f} fd\Omega_f=0\}  $$In particular, the compatibility condition
holds for the solvability of the above problem, and so $p_{1}$ is
unique up to a constant (see e.g.,\cite{temam}, pg 31, Theorem
2.4). In a similar way, for a given force term $u_{0}^{\ast }\in
[L^{2}(\Omega_f)]^3,$ the unique pair $[u_{2}(u_{0}^{\ast
}),p_{2}(u_{0}^{\ast })]\in [H^{1}(\Omega _{f})]^3\times \hat{L}^{2}(\Omega _{f})$ solves  the following problem:
\begin{equation}
\left\{ 
\begin{array}{l}
\lambda u_{2}-\text{div}(\nabla u_{2}+\nabla ^{T}u_{2})+\nabla
p_{2}=u_{0}^{\ast }\text{\ \ \ in \ }\Omega _{f} \\ 
\text{div}(u_{2})=0\text{\ \ \ \ in \ }\Omega _{f} \\ 
u_{2}|_{\partial \Omega _{f}}=0\text{ \ \ \ on \ }\partial \Omega _{f}.%
\end{array}%
\right.   \label{decomp-2}
\end{equation}

\noindent With the solution maps of \eqref{decomp-1}-\eqref{decomp-2}, the unique $\{u_{0},p_0\}$ of \eqref{s1} may then be
expressed as%
\begin{equation}
u_{0}=u_{1}(w_{1}|_{\Gamma _{s}})+u_{2}(u_{0}^{\ast });\text{ \ \ \ \ \ \ \
\ }p_0=p_{1}(w_{1}|_{\Gamma _{s}})+p_{2}(u_{0}^{\ast })+c_{0},  \label{u-repr}
\end{equation}%
where $c_{0}$ is the (presently) unknown constant component of the pressure $p_0$ of (\ref%
{s1}). Define the space%
\[
\textbf{S}=\left\{ (\varphi ,\psi )\in [H^{1}(\Gamma _{s})]^2\times [H^{1}(\Omega
_{s})]^3:\varphi =\psi |_{\Gamma _{s}}\right\}.
\]%
In order to generate a mixed variational formulation for the static ``thin"
and ``thick" solution variables in (\ref{s2})-(\ref{s3}), we respectively multiply (\ref{s2})$_{2}$ and (\ref{s3})$_{2}$ by functions $\varphi \in [H^{1}(\Gamma _{s})]^2,$ and $\psi \in
[H^{1}(\Omega _{s})]^3$ of the space $\textbf{S}$, use Green's Theorem, and add the subsequent relations, (taking into account (\ref{h-0})-(\ref{w-0})) to get:%
\[
\lambda \left\langle h_{1},\varphi \right\rangle _{\Gamma _{s}}+\frac{1}{%
\lambda }\left\langle \nabla _{\Gamma _{s}}h_{1},\nabla _{\Gamma
_{s}}\varphi \right\rangle _{\Gamma _{s}}+\lambda \left\langle w_{1},\psi
\right\rangle _{\Omega _{s}}
\]%
\[
+\frac{1}{\lambda }\left\langle \sigma (w_{1}),\epsilon (\psi )\right\rangle
_{\Omega _{s}}+\frac{1}{\lambda }\left\langle w_{1},\psi \right\rangle
_{\Omega _{s}}-c_{0}\left\langle \nu ,\varphi \right\rangle _{\Gamma _{s}}
\]%
\begin{equation}
+\left\langle \nu \cdot (\nabla u_{0}+\nabla ^{T}u_{0})|_{\Gamma
_{s}},\varphi \right\rangle _{\Gamma _{s}}-\left\langle (p_{1}+p_{2})
\nu ,\varphi \right\rangle _{\Gamma _{s}}  \label{1a}
\end{equation}%
\[
=-\frac{1}{\lambda }\left\langle \nabla _{\Gamma _{s}}h_{0}^{\ast },\nabla
_{\Gamma _{s}}\varphi \right\rangle _{\Gamma _{s}}-\frac{1}{\lambda }%
\left\langle \sigma (w_{0}^{\ast }),\epsilon (\psi )\right\rangle _{\Omega
_{s}}
\]%
\begin{equation}
+\left\langle h_{1}^{\ast },\varphi \right\rangle _{\Gamma
_{s}}+\left\langle w_{1}^{\ast },\psi \right\rangle _{\Omega _{s}}-\frac{1}{%
\lambda }\left\langle w_{0}^{\ast },\psi \right\rangle _{\Omega _{s}}. 
\label{1}
\end{equation}%
In order to estimate the sum in (\ref{1a}), we recall that for any $\varphi
\in [H^{\frac{1}{2}}(\Gamma _{s})]^2,$ there is a unique pair $[\widetilde{u}(\varphi ),%
\widetilde{p}(\varphi )]\in [H^{1}(\Omega _{f})]^3\times \hat{L}^{2}(\Omega _{f})$ which solves the BVP%
\begin{equation}
\left\{ 
\begin{array}{l}
\lambda \widetilde{u}-\text{div}(\nabla \widetilde{u}+\nabla ^{T}\widetilde{u%
})+\nabla \widetilde{p}=0\text{\ \ \ in \ }\Omega _{f} \\ 
\text{div}(\widetilde{u})=\frac{\int\limits_{\Gamma _{s}}(\varphi \cdot \nu
)d\Gamma _{s}\text{ }}{meas(\Omega _{f})}\text{\ \ \ \ \ \ in \ }\Omega _{f}
\\ 
\widetilde{u}|_{\Gamma _{s}}=\varphi \text{ \ \ \ on \ }\Gamma _{s} \\ 
\widetilde{u}|_{\Gamma _{f}}=0\text{ \ \ \ on \ }\Gamma _{f}.%
\end{array}%
\right.   \label{2aa}
\end{equation}
Appealing to problem \eqref{2aa}, we have for the sum in (\ref{1a}) that 
\[
\left\langle \nu \cdot (\nabla u_{0}+\nabla ^{T}u_{0})|_{\Gamma
_{s}},\varphi \right\rangle _{\Gamma _{s}}-\left\langle (p_{1}+p_{2})\cdot
\nu ,\varphi \right\rangle _{\Gamma _{s}}
\]%
\[
=\left\langle \nabla u_{0}+\nabla ^{T}u_{0},\nabla \widetilde{u}(\varphi
)+\nabla ^{T}\widetilde{u}(\varphi )\right\rangle _{\Omega
_{f}}+\left\langle \text{div}(\nabla u_{0}+\nabla ^{T}u_{0}),\widetilde{u}%
(\varphi )\right\rangle _{\Omega _{f}}
\]%
\[
-\left\langle \nabla (p_{1}+p_{2}),\widetilde{u}(\varphi )\right\rangle
_{\Omega _{f}}-\left\langle p_{1}+p_{2},\text{div}\widetilde{u}(\varphi
)\right\rangle _{\Omega _{f}}
\]%
\[
=\left\langle \nabla u_{0}+\nabla ^{T}u_{0},\nabla \widetilde{u}(\varphi
)+\nabla ^{T}\widetilde{u}(\varphi )\right\rangle _{\Omega _{f}}+\lambda
\left\langle u_{0},\widetilde{u}(\varphi )\right\rangle _{\Omega _{f}}
\]%
\begin{equation}
-\left\langle u_{0}^{\ast },\widetilde{u}(\varphi )\right\rangle _{\Omega
_{f}}-\left\langle p_{1}+p_{2},\text{div}\widetilde{u}(\varphi
)\right\rangle _{\Omega _{f}},  \label{2.5}
\end{equation}%
where we have also used (\ref{s1})$_{1}.$ Then from the last relation, and the
fluid representation in (\ref{u-repr}) we get 
\[
\left\langle \nu \cdot (\nabla u_{0}+\nabla ^{T}u_{0})|_{\Gamma
_{s}},\varphi \right\rangle _{\Gamma _{s}}-\left\langle (p_{1}+p_{2})
\nu ,\varphi \right\rangle _{\Gamma _{s}}
\]%
\[
=\left\langle \nabla u_{1}(w_{1}|_{\Gamma _{s}})+\nabla
^{T}u_{1}(w_{1}|_{\Gamma _{s}}),\nabla \widetilde{u}(\varphi )+\nabla ^{T}%
\widetilde{u}(\varphi )\right\rangle _{\Omega _{f}}
\]%
\[
+\left\langle \nabla u_{2}(u_{0}^{\ast })+\nabla ^{T}u_{2}(u_{0}^{\ast
}),\nabla \widetilde{u}(\varphi )+\nabla ^{T}\widetilde{u}(\varphi
)\right\rangle _{\Omega _{f}}
\]%
\[
+\lambda \left\langle u_{1}(w_{1}|_{\Gamma _{s}}),\widetilde{u}(\varphi
)\right\rangle _{\Omega _{f}}+\lambda \left\langle u_{2}(u_{0}^{\ast }),%
\widetilde{u}(\varphi )\right\rangle _{\Omega _{f}}
\]%
\begin{equation*}
\boxed{-\left\langle p_{1}(w_{1}|_{\Gamma _{s}}),\text{div}\widetilde{u}(\varphi
)\right\rangle _{\Omega _{f}}-\left\langle p_{2}(u_{0}^{\ast }),\text{div}%
\widetilde{u}(\varphi )\right\rangle _{\Omega _{f}}}
\end{equation*}%
\begin{equation}
-\left\langle u_{0}^{\ast
},\widetilde{u}(\varphi )\right\rangle _{\Omega _{f}}.  \label{3}
\end{equation}
Since $p_1(w_1|_{\Gamma_s})$ and $p_2(u^*_0)$ are each in $\hat{L}^{2}(\Omega _{f}),$ then the boxed term of \eqref{3} disappears. Combining \eqref{1}-\eqref{3}, we get
\[
\lambda \left\langle h_{1},\varphi \right\rangle _{\Gamma _{s}}+\frac{1}{%
\lambda }\left\langle \nabla _{\Gamma _{s}}h_{1},\nabla _{\Gamma
_{s}}\varphi \right\rangle _{\Gamma _{s}}+\lambda \left\langle w_{1},\psi
\right\rangle _{\Omega _{s}}+\frac{1}{\lambda }\left\langle \sigma (w_{1}),\epsilon (\psi )\right\rangle
_{\Omega _{s}}
\]
\[
+\frac{1}{\lambda }\left\langle w_{1},\psi \right\rangle
_{\Omega _{s}}-c_{0}\left\langle \nu ,\varphi \right\rangle _{\Gamma _{s}}+\lambda \left\langle u_{1}(w_{1}|_{\Gamma _{s}}),\widetilde{u}(\varphi
)\right\rangle _{\Omega _{f}}
\]
\[
+\left\langle \nabla u_{1}(w_{1}|_{\Gamma _{s}})+\nabla
^{T}u_{1}(w_{1}|_{\Gamma _{s}}),\nabla \widetilde{u}(\varphi )+\nabla ^{T}%
\widetilde{u}(\varphi )\right\rangle _{\Omega _{f}}\]%
\[
=-\left\langle \nabla u_{2}(u_{0}^{\ast })+\nabla ^{T}u_{2}(u_{0}^{\ast
}),\nabla \widetilde{u}(\varphi )+\nabla ^{T}\widetilde{u}(\varphi
)\right\rangle _{\Omega _{f}}
\]
\[
-\lambda \left\langle u_{2}(u_{0}^{\ast }),%
\widetilde{u}(\varphi )\right\rangle _{\Omega _{f}}+\left\langle u_{0}^{\ast
},\widetilde{u}(\varphi )\right\rangle _{\Omega _{f}}
\]
\[
-\frac{1}{\lambda }\left\langle \nabla _{\Gamma _{s}}h_{0}^{\ast },\nabla
_{\Gamma _{s}}\varphi \right\rangle _{\Gamma _{s}}-\frac{1}{\lambda }%
\left\langle \sigma (w_{0}^{\ast }),\epsilon (\psi )\right\rangle _{\Omega
_{s}}
\]%
\begin{equation}
+\left\langle h_{1}^{\ast },\varphi \right\rangle _{\Gamma
_{s}}+\left\langle w_{1}^{\ast },\psi \right\rangle _{\Omega _{s}}-\frac{1}{%
\lambda }\left\langle w_{0}^{\ast },\psi \right\rangle _{\Omega _{s}}. 
\label{27.5}
\end{equation}%
The last relation now gives us the following mixed variational
formulation in terms of the ``thin" and ``thick" structure variables $h_{1}$ and $%
w_{1}:$ Namely,%
\begin{eqnarray}
\mathbf{a}([h_{1},w_{1}],[\varphi ,\psi ])+\mathbf{b}([\varphi ,\psi ],c_{0}) &=&\mathbf{F}([\varphi
,\psi ]),\text{ \ \ \ }\forall \text{ }[\varphi ,\psi ]\in \textbf{S} \nonumber \\
\mathbf{b}([h_{1},w_{1}],r) &=&0,\text{ \ \ \ \ \ \ \ \ \ \ \ \ \ \ }\forall \text{ }r\in 
\mathbb{R}.
\label{vari}
\end{eqnarray}%
Here, the bilinear forms $\mathbf{a}(.,.):\textbf{S}\times \textbf{S}\rightarrow 
\mathbb{R}
$ and $\mathbf{b}(.,.):\textbf{S}\times 
\mathbb{R}
\rightarrow 
\mathbb{R}
$ are respectively given as%
\begin{eqnarray*}
\mathbf{a}([\phi ,\xi ],[\widetilde{\phi },\widetilde{\xi }]) &=&\lambda \left\langle
\phi ,\widetilde{\phi }\right\rangle _{\Gamma _{s}}+\frac{1}{\lambda }%
\left\langle \nabla _{\Gamma _{s}}\phi ,\nabla _{\Gamma _{s}}\widetilde{\phi 
}\right\rangle _{\Gamma _{s}} \\
&&+\lambda \left\langle \xi ,\widetilde{\xi }\right\rangle _{\Omega _{s}}+%
\frac{1}{\lambda }\left\langle \sigma (\xi ),\epsilon (\widetilde{\xi }%
)\right\rangle _{\Omega _{s}}+\frac{1}{\lambda }\left\langle \xi ,\widetilde{%
\xi }\right\rangle _{\Omega _{s}} \\
&&+\left\langle \nabla u_{1}(\xi |_{\Gamma _{s}})+\nabla ^{T}u_{1}(\xi
|_{\Gamma _{s}}),\nabla \widetilde{u}(\widetilde{\phi })+\nabla ^{T}%
\widetilde{u}(\widetilde{\phi })\right\rangle _{\Omega _{f}} \\
&&+\lambda \left\langle u_{1}(\xi |_{\Gamma _{s}}),\widetilde{u}(\widetilde{%
\phi })\right\rangle _{\Omega _{f}},\end{eqnarray*}%
and%
\[
\mathbf{b}([\widetilde{\phi },\widetilde{\xi }],r)=-r\left\langle \nu ,\widetilde{%
\phi }\right\rangle _{\Gamma _{s}},\]%
and the functional $\mathbf{F}(.)$ is defined as  
\begin{eqnarray*}
\mathbf{F}([\widetilde{\phi },\widetilde{\xi }]) &=&-\left\langle \nabla
u_{2}(u_{0}^{\ast })+\nabla ^{T}u_{2}(u_{0}^{\ast }),\nabla \widetilde{u}(%
\widetilde{\phi })+\nabla ^{T}\widetilde{u}(\widetilde{\phi })\right\rangle
_{\Omega _{f}} \\
&&-\frac{1}{\lambda }\left\langle \nabla _{\Gamma _{s}}h_{0}^{\ast },\nabla
_{\Gamma _{s}}\widetilde{\phi }\right\rangle _{\Gamma _{s}}-\frac{1}{\lambda 
}\left\langle \sigma (w_{0}^{\ast }),\epsilon (\widetilde{\xi }%
)\right\rangle _{\Omega _{s}} \\
&&-\lambda \left\langle u_{2}(u_{0}^{\ast }),\widetilde{u}(\widetilde{\phi }%
)\right\rangle _{\Omega _{f}}+\left\langle u_{0}^{\ast },\widetilde{u}(%
\widetilde{\phi })\right\rangle _{\Omega _{f}} \\
&&+\left\langle h_{1}^{\ast },\widetilde{\phi }\right\rangle _{\Gamma
_{s}}+\left\langle w_{1}^{\ast },\widetilde{\xi }\right\rangle _{\Omega
_{s}}-\frac{1}{\lambda }\left\langle w_{0}^{\ast },\widetilde{\xi }%
\right\rangle _{\Omega _{s}}.
\end{eqnarray*}%
The remainder of the proof hinges on properly applying Theorem \ref{BB}. It is clear that the bilinear forms $\mathbf{a}(.,.)$ and $\mathbf{b}(.,.)$ are continuous, and moreover $\mathbf{a}(.,.)$ is $\textbf{S}$-elliptic. In order to conclude that the variational problem
(\ref{vari}) has a unique solution, we need to show that the bilinear form $%
\mathbf{b}(.,.)$ satisfies the ``inf-sup" condition given in Theorem \ref{BB}. For this, we consider the following
problem: 

\noindent Given $r\in 
\mathbb{R}
$, let $z\in [H^{1}(\Gamma _{s})]^2$  satisfy%
\[
\Delta _{\Gamma _{s}}z=sgn(r)\nu \text{ \ \ \ on \ }\Gamma _{s}
\]%
It is easily seen that $\left\Vert \nabla _{\Gamma _{s}}z\right\Vert _{\Gamma
_{s}}\leq C\left\Vert \nu \right\Vert _{\Gamma _{s}}.$ Now, taking into account that $\gamma
: H^1(\Omega_s)\rightarrow H^{1/2}(\Gamma_s)$ is a surjective map, and so it has a continuous right inverse $\gamma^+(z),$ we have%
\begin{eqnarray*}
\sup_{\lbrack \eta ,\varsigma ]\in \textbf{S}}\frac{b([\eta ,\varsigma ],r)}{%
\left\Vert [\eta ,\varsigma ]\right\Vert _{\textbf{S}}} &\geq &\frac{b([z,\gamma^+(z)],r)}{%
\left\Vert z\right\Vert _{[H^{1}(\Gamma _{s})]^2}} \\
&=&\frac{-r\int\limits_{\Gamma _{s}}\nu \cdot zd\Gamma _{s}}{\left\Vert
z\right\Vert _{[H^{1}(\Gamma _{s})]^2}} \\
&=&-rsgn(r)\frac{\int\limits_{\Gamma _{s}}\Delta _{\Gamma _{s}}z\cdot
zd\Gamma _{s}}{\left\Vert z\right\Vert _{[H^{1}(\Gamma _{s})]^2}} \\
&=&\left\vert r\right\vert \frac{\int\limits_{\Gamma _{s}}|\nabla _{\Gamma
_{s}}z|^{2}d\Gamma _{s}}{\left\Vert z\right\Vert _{[H^{1}(\Gamma _{s})]^2}} \\
&=&\left\vert r\right\vert \left\Vert z\right\Vert _{[H^{1}(\Gamma _{s})]^2},
\end{eqnarray*}%
which yields that the inf-sup condition holds with the constant $\beta =\left\Vert z\right\Vert
_{[H^{1}(\Gamma _{s})]^2}.$ Consequently, the existence and uniqueness of the solution $%
[h_{1},w_{1}]\in \textbf{S}$ and $c_{0}\in 
\mathbb{R}
$  to the mixed variational problem (\ref{vari}) follows from Theorem \ref{BB}. \\ 

\noindent Now, the unique attainment of the solution components $[h_{1},w_{1}]\in \textbf{S}$ allows the subsequent recovery of the solution variables $h_0$ and $w_0$ via the resolvent relations (\ref{h-0})-(\ref{w-0}), with $$h_{0}=w_{0}|_{\Gamma _{s}},$$ since  the data $\Phi
^{\ast }=\left[ u_{0}^{\ast },h_{0}^{\ast },h_{1}^{\ast },w_{0}^{\ast
},w_{1}^{\ast }\right] \in \mathbf{H}$. Moreover, having the ``thick" structure component $$w_{1}\in [H^{1}(\Omega _{s})]^3~~ \text{with}~~ w_{1}|_{\Gamma _{s}}=h_{1}\in [H^{1}(\Gamma _{s})]^2,$$ and $u_{0}^{\ast
}\in [L^{2}(\Omega _{f})]^3,$ the fluid solution component $u_{0}$ and the pressure term $p_0$ can be
given via the expressions in (\ref{u-repr}). That is, the unique pair $\{u_0,p_0\}\in [H^1(\Omega_f)]^3\times L^2(\Omega_f)$ solves the Stokes system 
\begin{equation}
\left\{ 
\begin{array}{l}
\lambda u_{0}-\text{div}(\nabla u_{0}+\nabla ^{T}u_{0})+\nabla
p_0=u_{0}^{\ast }\text{\ \ \ in \ }\Omega _{f} \\ 
\text{div}(u_{0})=0\text{\ \ \ \ in \ }\Omega _{f} \\ 
u_{0}|_{\partial \Omega _{f}}=0\text{ \ \ \ on \ }\partial \Omega _{f}\\
u_{0}|_{\partial \Gamma _{s}}=w_1\text{ \ \ \ on \ }\Gamma _{s}.
\end{array}
\right.  \label{u-p}
\end{equation}
To conclude the proof of maximality, we must show that the derived solution $\left[
u_{0},h_{0},h_{1},w_{0},w_{1}\right]\in D(\mathbf{A})$ and satisfies the resolvent equations \eqref{s1}-\eqref{s3}. First, by \eqref{u-repr}, it is clear that $\{u_0,p_0\}\in [H^1(\Omega_f)]^3\times L^2(\Omega_f)$ where $u_0=u_1+u_2;$ $p_0=p_1+p_2+c_0$ satisfies \eqref{s1}. Consequently,\\ by Lemma \ref{reg},
$$
p_0 |_{\partial \Omega _{f}}\in H^{-1/2}(\partial \Omega _{f}),\text{ \ \ \ 
}\frac{\partial p_0 }{\partial \nu }|_{\partial \Omega _{f}}\in
H^{-3/2}(\partial \Omega _{f}),$$
and
$$(\nabla u_0 +\nabla u^{T}_0 )\cdot \nu |_{\partial \Omega _{f}}\in \lbrack
H^{-1/2}(\partial \Omega _{f})]^{d};~~~\lbrack \nabla \cdot (\nabla u_0 +\nabla^{T} u_0 )]\cdot \nu |_{\partial
\Omega _{f}}\in H^{-3/2}(\partial \Omega _{f})
$$
Moreover, if we take in \eqref{vari}, $\varphi =0,$ and $\psi\in [D(\Omega_s)]^3,$ we then have 
$$\lambda \left\langle w_{1},\psi
\right\rangle _{\Omega _{s}}+\frac{1}{\lambda }\left\langle \sigma (w_{1}),\epsilon (\psi )\right\rangle
_{\Omega _{s}}+\frac{1}{\lambda }\left\langle w_{1},\psi \right\rangle
_{\Omega _{s}}$$ 
$$=-\frac{1}{\lambda }%
\left\langle \sigma (w_{0}^{\ast }),\epsilon (\psi )\right\rangle _{\Omega
_{s}}+\left\langle w_{1}^{\ast },\psi \right\rangle _{\Omega _{s}}-\frac{1}{%
\lambda }\left\langle w_{0}^{\ast },\psi \right\rangle _{\Omega _{s}}.$$
Thus, via the resolvent relation $w_0=\frac{1}{\lambda}[w_1+w^*_0],$ we have 
\begin{equation}
\lambda w_1-\text{div} \sigma(w_0)+w_0=w^*_1~~in~~ [L^2(\Omega_s)]^3, \label{30.4} 
\end{equation}
which gives us \eqref{s3}. Since also $w_0\in [H^1(\Omega_s)]^3,$ an energy method yields in turn,
\begin{equation}
\nu \cdot \sigma(w_0)\in H^{-1/2}(\Gamma_s) \label{30.5}
\end{equation}
via \eqref{30.4}. This gives, for all $[\varphi,\psi]\in \mathbf{S},$ (after using the resolvent relation $h_0=\frac{1}{\lambda}(h_1+h^*_0)$):
\begin{equation*}
- \lambda\left\langle w_{1},\psi \right\rangle
_{\Omega _{s}}+\left\langle\text{div}\sigma (w_{0}),\psi \right\rangle
_{\Omega _{s}}-\left\langle w_{0},\psi
\right\rangle _{\Omega _{s}}=-\left\langle w_{1}^{\ast },\psi \right\rangle
_{\Omega _{s}},
\end{equation*}%
and
\[
\lambda \left\langle h_{1},\varphi \right\rangle _{\Gamma _{s}}+\left\langle \nabla _{\Gamma _{s}}h_{0},\nabla _{\Gamma
_{s}}\varphi \right\rangle _{\Gamma _{s}}+\lambda \left\langle w_{1},\psi
\right\rangle _{\Omega _{s}}+\left\langle \sigma (w_{0}),\epsilon (\psi )\right\rangle
_{\Omega _{s}}
\]%
\[
+\left\langle w_{0},\psi \right\rangle
_{\Omega _{s}}-c_{0}\left\langle \nu ,\varphi \right\rangle _{\Gamma _{s}}-\left\langle (p_{1}+p_{2})
\nu ,\varphi \right\rangle _{\Gamma _{s}}+\left\langle (p_{1}+p_{2})
\nu ,\varphi \right\rangle _{\Gamma _{s}}  
\]
\[
+\left\langle \nabla u_{0}+\nabla ^{T}u_{0},\nabla \widetilde{u}(\varphi
)+\nabla ^{T}\widetilde{u}(\varphi )\right\rangle _{\Omega _{f}}+\lambda
\left\langle u_{0},\widetilde{u}(\varphi )\right\rangle _{\Omega _{f}}
\]
\begin{equation*}
=\left\langle h_{1}^{\ast },\varphi \right\rangle _{\Gamma
_{s}}+\left\langle w_{1}^{\ast },\psi \right\rangle _{\Omega _{s}}+\left\langle u^*_{0},\widetilde{u}(\varphi )\right\rangle _{\Omega _{f}}.
\end{equation*}%
Integrating by part (having in hand the boundary trace in \eqref{30.5}), and adding the two relations now gives 
\[
\lambda \left\langle h_{1},\varphi \right\rangle _{\Gamma _{s}}+\left\langle \nabla _{\Gamma _{s}}h_{0},\nabla _{\Gamma
_{s}}\varphi \right\rangle _{\Gamma _{s}}-\left\langle \nu \cdot \sigma(w_0),\varphi \right\rangle _{\Gamma _{s}}
\]%
\[
-\left\langle p\nu,\varphi  \right\rangle+\left\langle \nabla p\nu,\varphi  \right\rangle
_{\Omega _{f}}+\lambda
\left\langle u_{0},\widetilde{u}(\varphi )\right\rangle _{\Omega _{f}}
\]
\[
-\left\langle \text{div}[\nabla u_{0}+\nabla ^{T}u_{0}], \widetilde{u}(\varphi
)\right\rangle _{\Omega _{f}}+\left\langle \nu \cdot [\nabla u_{0}+\nabla ^{T}u_{0}], \varphi
\right\rangle _{\Omega _{s}}
\]
\[
=\left\langle h_{1}^{\ast },\varphi \right\rangle _{\Gamma
_{s}}+\left\langle u^*_{0},\widetilde{u}(\varphi )\right\rangle _{\Omega _{f}} ~~~\forall~~ [\varphi,\psi]\in \mathbf{S}. 
\]
This finally gives the inference that
\[
\lambda \left\langle h_{1},\varphi \right\rangle _{\Gamma _{s}}+\left\langle \nabla _{\Gamma _{s}}h_{0},\nabla _{\Gamma
_{s}}\varphi \right\rangle _{\Gamma _{s}}-\left\langle \nu \cdot \sigma(w_0),\varphi \right\rangle _{\Gamma _{s}}
\]%
\[
-\left\langle p\nu,\varphi  \right\rangle+\left\langle \nu \cdot [\nabla u_{0}+\nabla ^{T}u_{0}], \varphi
\right\rangle _{\Omega _{s}}
\]
\[
=\left\langle h_{1}^{\ast },\varphi \right\rangle _{\Gamma
_{s}},~~\forall~~ \varphi \in [\mathcal{D}(\Omega_s)]^3,
\]
which gives the ``thin" structural equation in \eqref{s2}, and the proof of Theorem \ref{well} now finishes.
\end{proof}





\section{Acknowledgement}

The author Pelin G. Geredeli would like to thank the National Science Foundation, and acknowledge her partial funding from NSF Grant DMS-2348312.

\end{document}